\documentclass[10pt,twoside,a4paper,english,reqno]{amsart}
\usepackage{amscd}
\usepackage{caption}
\usepackage{a4wide}
\usepackage{amssymb,amsthm,amsmath,amsfonts,mathrsfs}
\usepackage{bm,latexsym,enumitem,dsfont,tabularx,graphicx,url}
\usepackage[utf8]{inputenc}
\usepackage{hyperref}
\usepackage{stmaryrd}
\hypersetup{
    colorlinks=true,
    linkcolor=blue,
    filecolor=red,
    urlcolor=blue,
    citecolor=red,
}
\usepackage{nicefrac,microtype,upref,ulem,doi,footmisc}

\theoremstyle{plain}
\newtheorem{theorem}{Theorem}[section]

\newtheorem{lemma}[theorem]{Lemma}

\newtheorem{assumption}[theorem]{Assumption}

\numberwithin{equation}{section}

\usepackage{tikz,tikzscale}
\usepackage{pgfplots}
\usepackage{subcaption}
\pgfplotsset{compat=1.13}

\newcommand{\logLogSlopeTriangle}[5]
{

    \pgfplotsextra
    {
        \pgfkeysgetvalue{/pgfplots/xmin}{\xmin}
        \pgfkeysgetvalue{/pgfplots/xmax}{\xmax}
        \pgfkeysgetvalue{/pgfplots/ymin}{\ymin}
        \pgfkeysgetvalue{/pgfplots/ymax}{\ymax}

        \pgfmathsetmacro{\xArel}{#1}
        \pgfmathsetmacro{\yArel}{#3}
        \pgfmathsetmacro{\xBrel}{#1-#2}
        \pgfmathsetmacro{\yBrel}{\yArel}
        \pgfmathsetmacro{\xCrel}{\xArel}

        \pgfmathsetmacro{\lnxB}{\xmin*(1-(#1-#2))+\xmax*(#1-#2)} 
        \pgfmathsetmacro{\lnxA}{\xmin*(1-#1)+\xmax*#1} 
        \pgfmathsetmacro{\lnyA}{\ymin*(1-#3)+\ymax*#3} 
        \pgfmathsetmacro{\lnyC}{\lnyA+#4*(\lnxA-\lnxB)}
        \pgfmathsetmacro{\yCrel}{\lnyC-\ymin)/(\ymax-\ymin)} 

        \coordinate (A) at (rel axis cs:\xArel,\yArel);
        \coordinate (B) at (rel axis cs:\xBrel,\yBrel);
        \coordinate (C) at (rel axis cs:\xCrel,\yCrel);

        \draw[#5]   (A)-- node[pos=0.5,anchor=south] {1}
                    (B)-- 
                    (C)-- node[pos=0.5,anchor=west] {#4}
                    cycle;
    }
}

\usepackage{xcolor}
\definecolor{lateksii_color}{RGB}{155,0,119}

 \title[HDG for the non-local CH-KP equation]{A Hybridizable Discontinuous Galerkin Method for the non--local Camassa--Holm--Kadomtsev--Petviashvili equation}

\author{Mukul Dwivedi \and Ruben Gutendorf \and Andreas Rupp}
\address{Department of Mathematics, Saarland University, Saarbrücken, Germany}
\email{\{mukul.dwivedi;ruben.gutendorf;andreas.rupp\}@uni-saarland.de}
\thanks{This work has been supported by the Deutsche Forschungsgemeinschaft (DFG, German
Research Foundation) -- 577175348.}
\subjclass[2020]{35Q35, 35Q53,  65M60,  65M22}
\keywords{Camassa--Holm--Kadomtsev--Petviashvili equation, hybridizable discontinuous Galerkin method, energy stability, error analysis}
\begin{document}

\begin{abstract}
This paper develops a hybridizable discontinuous Galerkin method for the two-dimensional Camassa--Holm--Kadomtsev--Petviashvili equation. The method employs Cartesian meshes with tensor-product polynomial spaces, enabling separate treatment of \(x\) and \(y\) derivatives. The non-local operator \(\partial_{x}^{-1}u_{y}\) is localized through an auxiliary variable \(v\) satisfying \(v_x = u_y\), allowing efficient element-by-element computations. We prove energy stability of the semi-discrete scheme and derive \(\mathcal{O}(h^{k+1/2})\) convergence in space. Numerical experiments validate the theoretical results and demonstrate the method's capability to accurately resolve smooth solutions and peaked solitary waves (peakons).
\end{abstract}

\maketitle
\section{Introduction}
The Camassa-Holm (CH) equation is the task to find the fluid velocity $u:\mathbb R\times \mathbb R^+ \to \mathbb R$ such that
\begin{equation}\label{eq:ch}
u_t - u_{txx} + 2\kappa u_x + 3uu_x = 2u_xu_{xx} + uu_{xxx}, \quad \text{ for }~ x \in \mathbb{R}, \; t > 0,
\end{equation}
where $\kappa > 0$ represents the background water depth. It constitutes a fundamental model for unidirectional shallow water waves over flat topography \cite{camassa1994new,CamassaHolm1993,Constantin1998,Johnson2002}, and peaked solitary wave (peakon) solutions of the form \(u(x,t) = ce^{-|x-ct|}\), for $c\in \mathbb R$, if \(\kappa = 0\).
The existence of these peaked solutions is intimately connected to essential nonlinear phenomena such as finite-time wave breaking \cite{ConstantinEscher1998}, which the equation successfully models \cite{camassa1994new, rodriguez2001cauchy}. The mathematical structure of \eqref{eq:ch} has been extensively analyzed, encompassing spectral stability \cite{ConstantinStrauss2002}, orbital stability \cite{constantin2001orbital} of the smooth solitary waves, and peakon instability \cite{constantin2000stability,NataliPelinovsky2020} of the peaked traveling waves. Global well-posedness and blow up results for \eqref{eq:ch} were studied in \cite{ConstantinEscher1998,Constantin1998}. For recent developments in the well-posedness of the CH equation \eqref{eq:ch}, we refer to \cite{brandolese2014local} and references therein.

Importantly, the CH equation \eqref{eq:ch} represents the evolving free surface with a single, one-dimensional (1D) time-dependent surface, thereby restricting the fluid motion to a vertical plane.  This simplification suppresses the weak transverse variations that occur in realistic oceanographic settings. Transverse modulations of the free surface are naturally captured by letting the displacement depend on the two spatial variables $(x,y)\in\mathbb{R}^{2}$, leading to a two-dimensional (2D) generalization, the Camassa–Holm–Kadomtsev–Petviashvili (CH–KP) equation to find $u:\mathbb R^2\times \mathbb R^+ \to \mathbb R$ such that
\begin{equation}\label{eq:chkp}
\bigl(u_{t}-u_{t xx}+2\kappa u_{x}+3uu_{x}-2u_{x}u_{x x}-uu_{x x x}\bigr)_{x}+u_{y y}=0,
\qquad \text{for }~ (x,y)\in\mathbb{R}^{2},\; t>0,
\end{equation}
which was first derived in the context of nonlinear elasticity \cite{Chen2006} and later rigorously justified for irrotational shallow-water flows \cite{GuiLiuLuoYin2021}.  Equation~\eqref{eq:chkp} extends the CH framework in the spirit of the Kadomtsev–Petviashvili (KP) theory \cite{KadomtsevPetviashvili1970}, while retaining the stronger nonlinearity of CH, and thus provides a richer alternative to the classical KP \cite{KadomtsevPetviashvili1970} and Korteweg--de Vries \cite{Craig1985} approximations.  Its applicability ranges from pure shallow-water dynamics to regimes influenced by Coriolis forcing \cite{GuiLiuSun2019}.  The transverse stability of smooth solitary waves and global well-posedness of the CH-KP equation \eqref{eq:chkp} in Sobolev spaces has been studied in \cite{Pelinovsky2024,GuiLiuLuoYin2021}.
 
 The nonlocal evolution form emerges through the application of the nonlocal operator \( \partial_x^{-1} \) yielding
\begin{equation}\label{eq:evo}
u_t + (1 - \partial_x^2)^{-1} \left[  f(u)_x - 2u_xu_{xx} - uu_{xxx} + \partial_x^{-1}u_{yy} \right] = 0,
\end{equation}
where \( f(u) = 2\kappa u + \frac{3}{2}u^2 \), and \( \partial_x^{-1}g(x) := \int_{+\infty}^x g(s)  ds \) is defined for functions decaying to 0 as \( x \to +\infty \), see \cite{Pelinovsky2024}.
This formulation admits a Hamiltonian structure
\begin{equation}\label{eq:energy}
u_t = -J F'(u) \quad \text{ with }\quad  
F[u] := \frac{1}{2} \iint_{\mathbb{R}^2} \left( u^3 + uu_x^2  + (\partial_x^{-1}u_y)^2\right)\, \mathrm{dx dy},
\end{equation} 
where \( J := \partial_x(1-\partial_x^2)^{-1}\) is a skew-adjoint operator and \( F \) represents a conserved energy for local solutions \( u \in X^s(\mathbb R^2) := \{ u \in H^s(\mathbb R^2) : \partial_x^{-1}u \in H^s(\mathbb R^2),~ u_x \in H^s(\mathbb R^2) \}\) with \(s \geq 2\). This system possesses additional conserved quantities, including momentum
\begin{equation}\label{eq:momentum}
E[u] = \frac{1}{2} \iint_{\mathbb{R}^2} \left( u^2 + u_x^2 \right)\, \mathrm{dx dy},
\end{equation}
and mass \cite{Pelinovsky2024,GuiLiuLuoYin2021}. The conservation of \( E[u] \) follows from multiplying \eqref{eq:evo} by \( (1-\partial_x^2)u \) and integrating over \( \mathbb R^2 \).
The evolution form \eqref{eq:evo} provides a viable framework for analysis by embedding dispersive effects within the Helmholtz resolvent $(1-\partial_x^2)^{-1}$, which is coercive on $L^2(\Omega)$ and facilitates energy-based approaches \cite{Pelinovsky2024}. Various wave breaking criteria for the equation \eqref{eq:evo}
were obtained in \cite[Theo. 1.2-1.4]{GuiLiuLuoYin2021}.

For numerical purposes, we consider the initial-boundary value problem on a sufficiently large bounded domain $\Omega := (x_L, x_R) \times (y_B, y_T)\subset \mathbb R^2$ and for a given $T>0$, i.e.,
\begin{subequations}\label{eq:ibvp}
\begin{align}
u_t + (1 - \partial_x^2)^{-1} \left[ f(u)_x - (uu_x)_{xx} + \frac{1}{2}(u_x^2)_{x} + \partial_x^{-1} u_{yy} \right] &= 0,\qquad (x,y)\in \Omega, \quad t\in(0,T), \\
u_0(x,y)&=u(x,y,0), \qquad (x,y)\in \Omega, \\
u(x_L,y,t) = u(x_R,y,t)   &= 0, \qquad y\in (y_B,y_T), \quad t\in(0,T),\\
 u(x,y_B,t) &= 0, \qquad x\in (x_L,x_R), ~~ ~t\in(0,T),\\
(\partial_x^{-1} u_y)(x_R,y,t) &= 0, \qquad y\in (y_B,y_T), \quad t\in (0,T),\\
(\partial_x^{-1} u_y)(x,y_T,t) &= 0, \qquad x\in (x_L,x_R), \quad t\in (0,T),\\
\partial_x u(x_L,y,t) = \partial_x u(x_R,y,t) &= 0, \qquad y\in (y_B,y_T), \quad t\in (0,T).
\end{align}
\end{subequations}
These boundary conditions ensure: (i) compatibility of $\partial_x^{-1}$ by enforcing flux balance at $x_R$ \cite{ZhangXuLiu2023}, (ii) decay properties mimicking the whole-space problem \cite{GuiLiuLuoYin2021}, and (iii) consistency with the Helmholtz structure. It should be noted that the above set of boundary conditions is not unique; other admissible choices (e.g., periodic or different homogeneous/inhomogeneous data) can also be employed, provided they are compatible with the well‑posedness theory. Local well-posedness holds for initial data $u_0 \in X^s(\mathbb R^2)$ with $s \geq 2$ \cite[Theo. 1.1]{GuiLiuLuoYin2021}.

Numerical approaches for 1D CH-type equations  \eqref{eq:ch} encompass diverse methodologies, each with distinct strengths and limitations.  Finite difference schemes \cite{coclite2008convergent,galtung2021numerical,HoldenRaynaud2006} utilize adaptive stencils to achieve convergence, error estimates for Galerkin method are derived in \cite{antonopoulos2019error}, and the finite volume method \cite{ArtebrantSchroll2006} incorporates shock-capturing techniques for adaptive peakon resolution while maintaining mass conservation. Spectral techniques \cite{dwivedi2025convergent,KalischLenells2005,Kalisch2006Xavier} leverage exponential convergence for smooth periodic solutions.  Operator splitting methods \cite{ZhanZhou2018} decouple nonlinear advection from dispersive terms via Strang splitting. The local discontinuous Galerkin (LDG) formulation for 1D CH equation pioneered by Xu and Shu \cite{XuShu2008}, which establishes energy stability through auxiliary flux variables, and this framework is extended to the 2D CH-KP system on Cartesian grids by Zhang, Xu, and Liu in  \cite{ZhangXuLiu2023}. It establishes $\mathcal{O}(h^k)$ order of convergence in space. Related LDG approaches, including the invariant-preserving DG scheme \cite{liu2016invariant} for Hamiltonian CH dynamics and the $\mu$-CH solvers in one dimension are examined in \cite{zhang2019local}. For more DG methods, developed for the similar nonlinear dispersive equations and nonlinear nonlocal dispersive equations, we refer to \cite{bona2013conservative,dwivedi2024analysis,levy2004local,yan2002local,ZhangXuShu2021} and references therein. These methods duplicate interface unknowns, leading to expanded global degrees of freedom and auxiliary variables, and  $\mathcal{O}(h^k)$ or $\mathcal{O}(h^{k+{\frac{1}{2}}})$ convergence is attained. 
These considerations motivate the hybridizable discontinuous Galerkin (HDG) methodology developed in what follows.

HDG methods, introduced by Cockburn et al. \cite{Cockburn2008super,CockburnGopalakrishnanLazarov2009,CockburnGopalakrishnanSayas2010}, constitute a significant advancement in (high-order) finite element discretizations. By reformulating discontinuous Galerkin frameworks through hybridization, HDG methods decompose the solution process into two complementary components: element-local solvers that compute approximations to the unknown in the bulk given trace values on the mesh skeleton, and a global system that enforces continuity through numerical fluxes. The methodology has been systematically extended beyond its original elliptic focus. Foundational work established HDG formulations for parabolic systems \cite{nguyen2009implicit,NguyenPeraireCockburn2009}, incompressible Navier--Stokes equations \cite{nguyen2011implicit}, phase-field models including Cahn-Hilliard equations \cite{chen2023superconvergence,kirk2024numerical}, with recent advances including multigrid solvers \cite{MR4803816}. For nonlinear dispersive wave equations, specialized HDG implementations have achieved notable success such as energy-stable schemes for KdV equations \cite{ChenCockburnDong2016,ChenDongJiang2018,Dong2017,Samii2016}, well-balanced explicit HDG discretizations of Serre–Green–Naghdi systems \cite{samii2018explicit}, for strongly Damped Wave Problem \cite{gupta2025hybridizable}, and for equations in continuum mechanics \cite{nguyen2012hybridizable}.

Central to these developments is the design of the trace operators on the interfaces, typically employing stabilized fluxes of the form
\begin{equation}
\hat{\omega}_h = \omega_h + \tau ( \widehat u_h -u_h),
\end{equation}
where $\tau$ is a stabilization parameter that ensures the well-posedness of local problems while maintaining accuracy and stability. Nevertheless, the nonlinear CH equation \eqref{eq:ch} and the nonlocal nonlinear CH-KP equation \eqref{eq:ibvp} present distinctive computational challenges for HDG frameworks due to three interconnected factors: the presence of peakon solutions with weak derivative continuity requirements \cite{CamassaHolm1993,Constantin1998}, nonlocal operator structures that complicate flux conservation \cite{GuiLiuLuoYin2021}, and higher order nonlinearities. While HDG has demonstrated success with related dispersive models \cite{ChenCockburnDong2016,Dong2017}, its application to peakon-supporting systems remains a largely unexplored research topic requiring novel methodological adaptations.

In this work, we develop the first HDG method for the CH-KP equation \eqref{eq:ibvp}, employing tensor-product polynomial spaces $Q_k(K_{ij}) = P_k(I_i) \otimes P_k(J_j)$, where $K_{ij} = [x_{i-1}, x_i] \times [y_{j-1}, y_j]=:I_i\times J_j$, to decouple spatial derivatives ({\it cf.} Section \ref{sec:notation}). The nonlocal operator $\partial_x^{-1}$ presents unique challenges since the CH-KP equation \eqref{eq:ibvp} describes the time evolution of $\partial_x u$ rather than $u$ itself, requiring additional constraints for well-posedness \cite{wang1994wave}. To address this, we implement a reconstruction technique for the primitive, $v_h$ which approximates $v:= \partial_x^{-1}u_y$, by rewriting $v_x = u_y$ and imposing a boundary constraint   $\widehat{v}_{h}(x_R,y) = 0$ at the right domain boundary $\Gamma_R$ ({\it cf.} Section \ref{sec:notation}), which anchors the reconstruction while preserving uniqueness, following the {\it non-integration DG} framework established for related one dimensional nonlocal equation \cite{zhang2020discontinuous} involving operator $\partial^{-1}_x$. Global coupling is minimized via three trace variables on the mesh skeleton $\mathcal{F}_h$: $\widehat{u}_h$, $\widehat{q}_h$, and the reconstructed primitive $\widehat{v}_h$, where $\widehat{u}_h$ and $\widehat{q}_h$ are approximations of the trace of the exact solution $u$ and $u_x$ on the interfaces, respectively. Compared to multi-trace LDG methods, this architecture reduces degrees of freedom while preserving stability and improving convergence order. We establish $\mathcal{O}(h^{k+1/2})$ order of convergence in the energy norm for $k \geq 2$ under the \textit{a priori} assumptions:
\begin{equation}
\|u-u_h\|_{\mathcal{T}_h} +\|q-q_h\|_{\mathcal{T}_h} \leq h^{2}.
\end{equation}

In this paper, we denote $C$ as a generic constant whose value can change in each step and it is independent of $h$.

The remainder of the paper is organized as follows. 
Section~\ref{sec:notation} introduces the notation and defines the tensor-product function spaces for the HDG discretization. 
Section~\ref{sec:formulation} details the HDG formulation for the CH-KP equation and establishes semi-discrete energy stability.  Section~\ref{sec:error} provides a $\mathcal{O}(h^{k+1/2})$ order of convergence of the scheme under appropriate regularity assumptions.  Section~\ref{sec:numerics} presents numerical experiments validating theoretical convergence rates and demonstrating peakon-resolution capabilities. 
Concluding remarks and future research directions are summarized in Section~\ref{sec:conclusion}. 
Technical proofs of supporting lemmas and validation of assumptions are deferred to Appendix~\ref{sec:proofs}.

\section{Notation and tensor--product spaces}\label{sec:notation}
Throughout, we work on the axis-aligned rectangle $\Omega = (x_L,x_R)\times(y_B,y_T)\subset\mathbb{R}^2$. The left and right vertical boundaries of $\Omega$ are denoted by $\Gamma_L$ and $\Gamma_R$, respectively, and the top and bottom horizontal boundaries of $\Omega$ are denoted by $\Gamma_T$ and $\Gamma_B$, respectively.
Let the one-dimensional cell intervals be
\[
I_i := (x_{i-1},x_i), \quad
J_j := (y_{j-1},y_j), \quad
1\le i\le N_x,\; 1\le j\le N_y,
\]
with $x_0=x_L$, $x_{N_x}=x_R$, $y_0=y_B$, $y_{N_y}=y_T$. 
The rectangular elements are
\[
K_{ij} := I_i \times J_j, \quad
1\le i\le N_x,\; 1\le j\le N_y,
\]
constituting the Cartesian mesh
\[
\mathcal{T}_h := \bigl\{K_{ij} \mid 1\le i\le N_x,\; 1\le j\le N_y \bigr\}, \quad
h := \max_{i,j} \bigl\{ |I_i|, |J_j| \bigr\}.
\]
 The sets of vertical faces and horizontal faces are
\[
\mathcal{V} := \bigl\{ \mathcal{V}_{i,j} := \{x = x_i\} \times J_j \mid 0 \le i \le N_x,\; 1 \le j \le N_y \bigr\}, 
\]
\[
\mathcal{H} := \bigl\{ \mathcal{H}_{i,j} := I_i \times \{y = y_j\} \mid 1 \le i \le N_x,\; 0 \le j \le N_y \bigr\}. 
\]
The full set of faces is $\mathcal{F}_h := \mathcal{V} \cup \mathcal{H}$, with boundary faces $\mathcal{F}_h^\partial := \mathcal{F}_h \cap \partial\Omega$. For each element $K_{ij}$, its boundary consists of four oriented face segments, and we write the set as follows
\[
\partial K_{ij} := \bigl\{ \Gamma_{i,j}^L,\; \Gamma_{i,j}^R,\; \Gamma_{i,j}^B,\; \Gamma_{i,j}^T \bigr\},
\]
 displayed in Figure \ref{fig:cartesian_grid}. The outward normal vector $\mathbf{n} = (n_x, n_y)$ is fixed for each face type: $\mathbf{n} = (-1,0)$ on left vertical faces $\Gamma_{i,j}^L$, $\mathbf{n} = (1,0)$ on right vertical faces $\Gamma_{i,j}^R$, $\mathbf{n} = (0,-1)$ on bottom horizontal faces $\Gamma_{i,j}^B$, and $\mathbf{n} = (0,1)$ on top horizontal faces $\Gamma_{i,j}^T$.
\begin{figure}[htbp]
\centering
\includegraphics[width=0.5\textwidth]{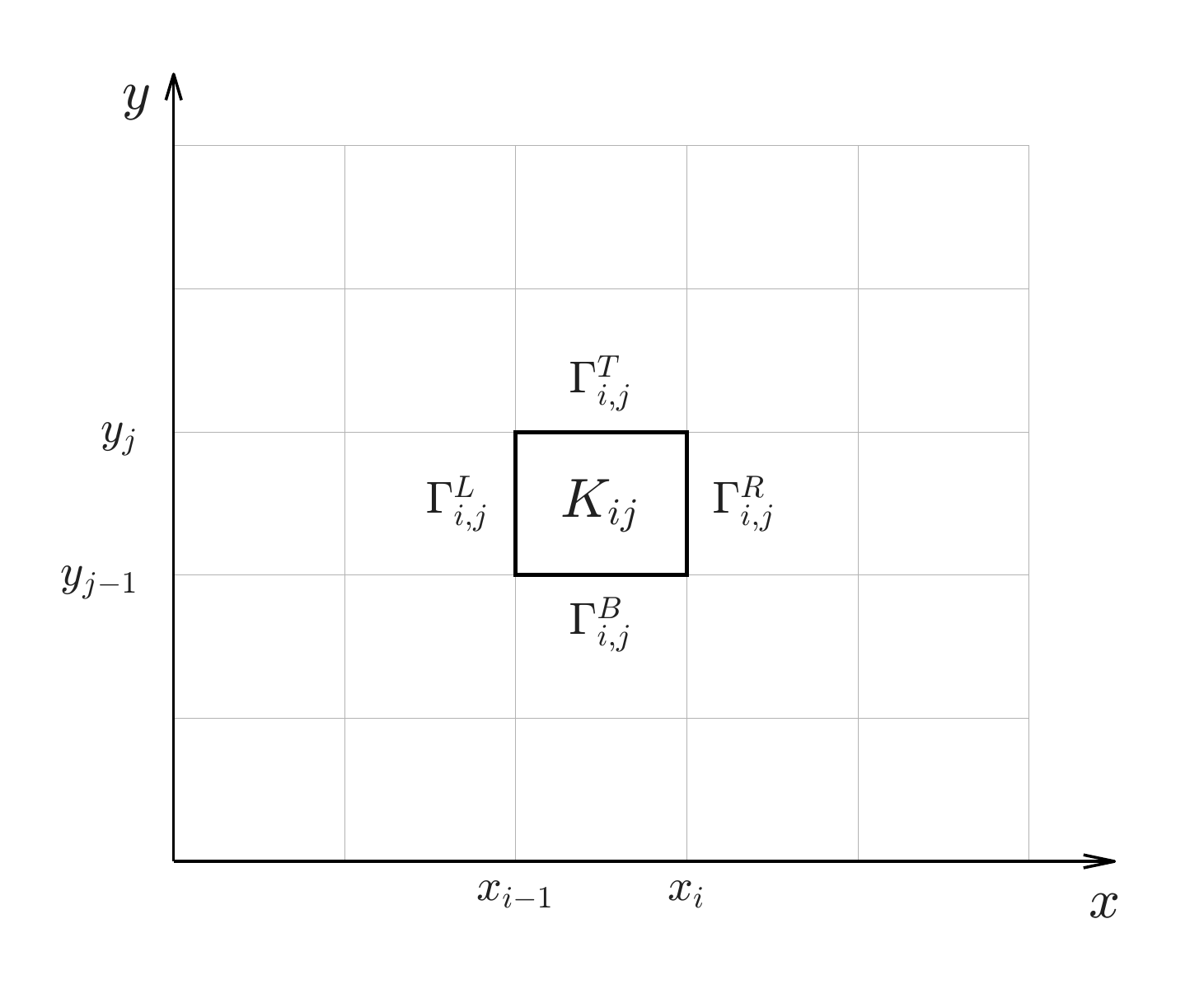}
\caption{Cartesian grid structure showing the element $K_{ij}$ with its boundary faces.}
\label{fig:cartesian_grid}
\end{figure}
The element boundary space $\partial\mathcal{T}_h$ is defined as the product of all element boundaries, which corresponds to the direct sum of the boundary sets:
\[
\partial\mathcal{T}_{h} := \bigoplus_{K \in \mathcal{T}_h} \partial K.
\]
For \(U,V\in L^2(\mathcal T_h)\), denote
\begin{equation}\label{innerprod}
    (U,V)_{K_{ij}} := \int_{K_{ij}} UV\,dxdy,
  \qquad
  (U,V)_{\mathcal{T}_h} := \sum_{K_{ij}\in\mathcal{T}_h}(U,V)_{K_{ij}}, \qquad \|V\|_{\mathcal{T}_h} := (V,V)_{\mathcal{T}_h}^{1/2}.
\end{equation}
We use the notation
\begin{equation}
\begin{aligned}\label{innerprod2}
\langle \phi, \psi  \rangle_{\partial\mathcal T_h} := \sum_{K_{ij} \in \mathcal T_h} \int_{\partial K_{ij}} \phi|_{K_{ij}} \psi|_{K_{ij}} \,ds 
=:\langle \phi,\psi\rangle_{L}+\langle \phi,\psi\rangle_{R}+\langle \phi,\psi\rangle_{B}+\langle \phi,\psi\rangle_{T},
\end{aligned}
\end{equation}
where
\begin{equation}
\begin{aligned}\label{innerprod4}
\langle \phi,\psi\rangle_{L}&= \sum_{K_{ij} \in \mathcal T_h} \Bigl[\int_{\Gamma_{ij}^L} \phi|_{K_{ij}} \psi|_{K_{ij}}\, ds, \quad \langle \phi,\psi\rangle_{R}= \sum_{K_{ij} \in \mathcal T_h} \Bigl[\int_{\Gamma_{ij}^R} \phi|_{K_{ij}} \psi|_{K_{ij}}\, ds, \\ \langle \phi,\psi\rangle_{T}&= \sum_{K_{ij} \in \mathcal T_h} \Bigl[\int_{\Gamma_{ij}^T} \phi|_{K_{ij}} \psi|_{K_{ij}}\,ds, \quad\langle\phi,\psi\rangle_{B}= \sum_{K_{ij} \in \mathcal T_h} \Bigl[\int_{\Gamma_{ij}^B} \phi|_{K_{ij}} \psi|_{K_{ij}} \,ds, 
\end{aligned}
\end{equation}
and the norms $\| \psi \|_{W} :=\langle \psi, \psi \rangle_W^{1/2}$, where $W = L,R,B,T$, additionally, we write
\begin{equation}
\begin{aligned}\label{innerprod3}
\| \psi \|^2_{\mathcal V} :=\|\psi\|^2_{L}+\|\psi\|^2_{R},\quad \text{ and  }\quad \| \psi \|^2_{\mathcal H} :=\|\psi\|^2_{B}+\|\psi\|^2_{T}.
\end{aligned}
\end{equation}

For an integer \(k\geq0\), let
\(
  P_k(S)
\)
denote the polynomials of total degree at most \(k\) on a set \(S\).
The tensor–product cell space is
\[
  V_h^k := \bigl\{v\in L^2(\Omega)\;:\;
                   v|_{K_{ij}}\in Q_k(K_{ij})\ \forall K_{ij}\in\mathcal{T}_h
                 \bigr\},  \quad \text{ where } \quad Q_k(K_{ij}) := P_k(I_i)\otimes P_k(J_j).
\]
On the skeleton, we introduce the trace space
\begin{align*}
  M_h^k := &\bigl\{\mu\in L^2(\mathcal{F}_h) \mid
                 \mu|_{F}\in P_k(F),\quad \forall F\in\mathcal{F}_h
           \bigr\},\\
   \mathcal V_h^k(g) :=& \bigl\{\gamma \in M_h^k \mid \gamma|_{\Gamma_L}=\gamma|_{\Gamma_R}=g\},\\
    \widetilde{M}_h^k(\mathcal S) :=& \bigl\{ \mu \in L^2(\mathcal{S}) \mid 
\mu|_F \in P_k(F), \quad \forall F \in \mathcal S \bigr\}, \qquad \mathcal S \subset  \mathcal F_h.
\end{align*}

\section{HDG Formulation and Stability Analysis for the CH-KP Equation}
\label{sec:formulation}

This section develops the HDG formulation and proves energy stability for the CH-KP equation \eqref{eq:ibvp} on a bounded domain. Building on the evolution form \eqref{eq:evo}, we consider the Helmholtz resolvent \((1-\partial_x^{2})^{-1}\) which provides coercivity and ensures well-posedness under homogeneous Dirichlet/Neumann boundary conditions with initial data \(u_0(x,y)=u(x,y,0)\) (see \cite{GuiLiuLuoYin2021, Pelinovsky2024}). 


We introduce auxiliary variables to decompose nonlocal operators and higher derivatives
\begin{subequations}\label{eq:local-system}
    \begin{equation}\label{eq:local-system-fixed}
        \begin{aligned}
            q &:= u_x, \qquad p := (uq)_x, \qquad s := u_y, \\
            v_x &:= s, \qquad r := z_x + f(u)_x - p_x + \tfrac{1}{2}(q^2)_x + v_y, \qquad z := r_x 
        \end{aligned}
    \end{equation}
    which implies \(v_y = \partial_x^{-1}u_{yy}\) and \(r = (1-\partial_x^{2})^{-1}[f(u)_x - p_x + \tfrac{1}{2}(q^2)_x + v_y]\). The CH-KP equation \eqref{eq:ibvp} is then equivalent to
    \begin{equation}\label{eq:local-system-fixedb}
        u_t + r = 0,
    \end{equation}
\end{subequations}
along with the initial and boundary conditions in \eqref{eq:ibvp}.
This decomposition isolates each spatial derivative for the local elimination of auxiliary variables and preserves $L^2$-stability via the Helmholtz structure of $r$ \cite{Pelinovsky2024}.

\subsubsection*{Local problems} Let us consider an element \(K_{ij}\). For the given polynomial traces \(\widehat{u}_{\Gamma_{i,j}^L}\), \(\widehat{u}_{\Gamma_{i,j}^R}\), \(\widehat{u}_{\Gamma_{i,j}^B}\), \(\widehat{q}_{\Gamma_{i,j}^L}\), \(\widehat{q}_{\Gamma_{i,j}^R}\), \(\widehat{v}_{\Gamma_{i,j}^R}\), and \(\widehat{v}_{\Gamma_{i,j}^T}\), find \((U,Q,P,S,V,Z,R) \in [Q_k(K_{ij})]^7\) that satisfies the following local system 
\begin{align*}
Q &= U_x, \qquad P = (UQ)_x, \qquad
S = U_y, \qquad V_x = S, \\
Z &= R_x, \qquad R - Z_x = \partial_x f(U) -P_x + \tfrac{1}{2}(Q^2)_x + V_y, \qquad
U_t + R = 0,
\end{align*}
with the initial condition \(U|_{t=0} = u_0|_{K_{ij}}\) and boundary conditions:
\begin{align*}
U|_F &= \widehat{u}_F,\qquad  F \in \{\Gamma_{i,j}^L, \Gamma_{i,j}^R, \Gamma_{i,j}^B\}, \\ Q|_{\Gamma_{i,j}^L} &= \widehat{q}_{\Gamma_{i,j}^L},\quad Q|_{\Gamma_{i,j}^R} = \widehat{q}_{\Gamma_{i,j}^R}, \qquad V|_{\Gamma_{i,j}^R} = \widehat{v}_{\Gamma_{i,j}^R},\quad V|_{\Gamma_{i,j}^T} = \widehat{v}_{\Gamma_{i,j}^T}.
\end{align*}
\subsubsection*{Coupling conditions} Assuming the local problem is uniquely solvable for the given sufficiently smooth initial and boundary data, the local solutions \((U,Q,P,S,V,Z,R) \in [Q_k(K_{ij})]^7\) form a global solution to \eqref{eq:local-system-fixed}-\eqref{eq:local-system-fixedb} if and only if the quantities $U,Q,UQ,V,G$ are continuous over vertical and $U, V$ are continuous over horizontal faces, where \(\mathcal{G}:=Z+f(U)-P\), and the boundary conditions in \eqref{eq:ibvp} are satisfied. 
Note that, if $U$ and $Q$ are continuous along the interfaces then $R=-U_t$ and $Q^2$ will be continuous for each fixed time $t$.

The HDG method therefore solves the local problems element-wise and enforces these transmission conditions via global coupling to determine the trace unknowns.
This approach preserves the tensor-product structure of the discretization while restricting global coupling to the skeleton. The HDG scheme is then constructed as a discrete analogue of this continuous characterization.

\subsection{An HDG scheme} For every element \(K_{ij}\in \mathcal T_h\),  
assume that the polynomial traces  
\[
\widehat{u}_{h,\Gamma_{i,j}^L},\quad \widehat{u}_{h,\Gamma_{i,j}^R}, \quad\widehat{u}_{h,\Gamma_{i,j}^B}, \quad\widehat{q}_{h,\Gamma_{i,j}^L}, \quad\widehat{q}_{h,\Gamma_{i,j}^R},\quad  \widehat{v}_{h,\Gamma_{i,j}^R},\quad \widehat{v}_{h,\Gamma_{i,j}^T}
\]
are given.  
Note that these families constitute the only unknowns that couple different cells; once they are fixed, each \(K_{ij}\) can be treated independently. Given the traces above and the restriction of the initial datum \(u_h(x,y,0) = \mathbb Pu_0\) to \(K_{ij}\), where $\mathbb P$ is the usual $L^2$-projection onto the finite element space $V_h^k$,
determine
\(
  (u_h,q_h,p_h,s_h,v_h,z_h,r_h)\;\in\;\bigl[ Q_k(K_{ij}) \bigr]^7
\)
such that, for every test function
\((\phi_u,\phi_q,\phi_p,\phi_s,\phi_v,\phi_z,\phi_r)\in\bigl[ Q_k(K_{ij}) \bigr]^7\), the following hold
\begin{subequations}\label{eq:hdg-local}
\begin{align}
  &(q_h,\phi_q)_{K_{ij}}
  +(u_h,\partial_x\phi_q)_{K_{ij}}
  -\langle \widehat u_hn_x,\phi_q\rangle_{\partial K_{ij}} = 0,
  \label{eq:hdg-local:a}\\[2pt]
  &(p_h,\phi_p)_{K_{ij}}
  +(u_h q_h,\partial_x\phi_p)_{K_{ij}}
  -\langle \widehat{u_h q_h}n_x,\phi_p\rangle_{\partial K_{ij}} = 0,
  \label{eq:hdg-local:b}\\[2pt]
  &(s_h,\phi_s)_{K_{ij}}
  +(u_h,\partial_y\phi_s)_{K_{ij}}
  -\langle \widehat u_hn_y,\phi_s\rangle_{\partial K_{ij}} = 0,
  \label{eq:hdg-local:c}\\[2pt]
  &(s_h,\phi_v)_{K_{ij}}
  +(v_h,\partial_x\phi_v)_{K_{ij}}
  -\langle \widehat v_hn_x,\phi_v\rangle_{\partial K_{ij}} = 0,
  \label{eq:hdg-local:d}\\[2pt]
  &(z_h,\phi_z)_{K_{ij}}
  +(r_h,\partial_x\phi_z)_{K_{ij}}
  -\langle \widehat r_hn_x,\phi_z\rangle_{\partial K_{ij}} = 0,
  \label{eq:hdg-local:f}\\[2pt]
  &(r_h,\phi_r)_{K_{ij}}
  + \bigl(z_h-p_h+ f(u_h),\partial_x\phi_r\bigr)_{K_{ij}} +\bigl(\tfrac12 q_h^{2},\partial_x\phi_r\bigr)_{K_{ij}}
      -\bigl\langle \widehat z_h
                -\widehat p_h+\widehat{f(u_h)},n_x
               \phi_r\bigr\rangle_{\partial K_{ij}} \nonumber\\
  &\quad-\bigl\langle\tfrac12\widehat{q_h^{2}},n_x
               \phi_r\bigr\rangle_{\partial K_{ij}}
  +(v_h,\partial_y\phi_r)_{K_{ij}} -\langle \widehat v_hn_y,\phi_r\rangle_{\partial K_{ij}}= 0,
  \label{eq:hdg-local:g}\\[2pt]
  &\bigl( (u_h)_t ,\phi_u\bigr)_{K_{ij}}
  + (r_h,\phi_u)_{K_{ij}} = 0.
  \label{eq:hdg-local:h}
\end{align}
\end{subequations}
Here we have written, for convenience, 
\(
  \widehat \omega_h = \widehat \omega_{h,F}\text{ for every }F\in\partial K_{ij}\). The remaining traces are defined below, where $\tau_{\bullet}$ denote the stabilization parameters defined on the mesh skeleton $\partial \mathcal{T}_h$. 
On the vertical faces \(\Gamma_{i,j}^L\) and  \(\Gamma_{i,j}^R\), we define
\begin{equation}\label{Vertical_trace}
\begin{aligned}
\widehat z_h- \widehat p_h &=
\begin{cases}
   z_h -  p_h + \tau_{zpu}^+(\widehat u_h-u_h)n_x  , & \text{ on } \Gamma_{i,j}^L,\\[3pt]
    z_h -p_h +\tau_{zpu}^-
(\widehat u_h-u_h)n_x +\tau_{zpv}^-
(\widehat v_h-v_h)n_x, & \text{ on } \Gamma_{i,j}^R,
   \end{cases}\\[4pt]
   \widehat{f(u_h)} & = f(u_h)-\tau_{f}(\widehat u_h-u_h)n_x,\\[4pt]
\widehat r_h &=-(\widehat u_{h})_t,\\[4pt]
        \widehat v_h &=  v_h , \qquad  \text{ on } \Gamma_{i,j}^L,
        \\[4pt]
 \frac{1}{2}\widehat{q_h^2}&=\frac{1}{2}\widehat q_h ^2,\\[4pt]
\widehat{u_hq_h}&=  u_h\left(\frac{\widehat q_h+ q_h}{2}\right) +\tau_{uqq}(\widehat q_h-q_h)n_x.
\end{aligned}
\end{equation}
On the horizontal faces \(\Gamma_{i,j}^B\) and \(\Gamma_{i,j}^T\), we define
\begin{equation}\label{Horizontal_trace}
\begin{split}
\widehat v_h &=v_h, \qquad \text{ on } \Gamma_{i,j}^B,\\
\widehat u_h &=u_h, \qquad \text{ on } \Gamma_{i,j}^T.
\end{split}
\end{equation}
The functions 
\(
\tau_{\boldsymbol{\cdot}} \text{ and } \;\tau_{f}:=\tau_{f}(\widehat u_h,u_h)
\)
are defined on the mesh skeleton \(\partial\mathcal T_h\) and are collectively referred to as the stabilization function. They have to be chosen in such a way that the discrete problem \eqref{eq:hdg-local} is stable and uniquely solvable.
In particular, because \(f(u)\) is nonlinear, the last component
\(\tau_{f}(\widehat u_h,u_h):\partial\mathcal T_h\to\mathbb R\) is allowed to be nonlinear in both of its arguments.

\subsubsection*{Transmission condition:} It remains to determine families of globally coupled unknowns
\begin{equation*}
\widehat u_h^V\in \mathcal{V}_h^k(0),\quad \widehat u_h^{B}\in \widetilde M_h^k( \mathcal H\setminus \Gamma_T),\quad \widehat q_h^V\in \mathcal{V}_h^k(0),\quad 
\widehat v_h^{R}\in\widetilde M_h^k( \mathcal V\setminus \Gamma_L),\quad 
\widehat v_h^{T}\in\widetilde M_h^k( \mathcal H\setminus \Gamma_B),
\end{equation*}
where the superscripts \(V, B, R,T\) indicate the associated spatial direction and orientation.
We let numerical traces $\widehat p_h, \widehat z_h \in \widetilde M_h^k(\partial\mathcal{T}_h)$. The globally unknown numerical traces can be found by enforcing 
the weak transmission conditions as follows:
\begin{equation}\label{eq:weak_TC}
\begin{aligned}
&\langle \widehat{u_hq_h}n_x,  \gamma\rangle_{\partial\mathcal T_h} = 0, \qquad \langle \widehat u_h n_y, \theta \rangle_{\partial\mathcal T_h} = \langle u_D n_y, \theta \rangle_{\Gamma_B},\qquad \langle \widehat v_h n_x, 
     \lambda \rangle_{\partial\mathcal T_h} = \langle v_Rn_x, \lambda \rangle_{\Gamma_R},\\&   
\langle  \widehat z_h+\widehat{f(u_h)}
             -\widehat p_h,  \nu n_x\rangle_{\partial\mathcal T_h} = 0, \qquad \langle \widehat v_hn_y, 
     \zeta  \rangle_{\partial\mathcal T_h} = \langle v_Tn_y, \zeta \rangle_{\Gamma_T},
\end{aligned}
\end{equation}
for all test traces \(\gamma, \nu \in \mathcal V^k_h(0)\), \(\theta\in\widetilde M_h^k( \mathcal H\setminus \Gamma_B)\), \(\lambda\in\widetilde M_h^k( \mathcal V\setminus \Gamma_L)\), and \(\zeta \in \widetilde M_h^k(\mathcal H\setminus \Gamma_T)\). No additional constraints are imposed on
\(\widehat u_h^x\) and  \(\widehat q_h^x\) because they are already single–valued on vertical faces by definition, and hence $\langle \widehat u_h^V n_x, \delta \rangle_{\partial\mathcal T_h} = 0$ and $\langle \widehat q_h^Vn_x, \sigma \rangle_{\partial\mathcal T_h} = 0$, for all $\delta, \sigma \in \mathcal V^k_h(0)$. Note that $n_x=0$ on the horizontal faces and $n_y=0$ on vertical faces.
\subsubsection*{Initial and boundary conditions:} On boundary faces, the numerical traces are fixed by the prescribed physical data and by an additional anchoring condition for the
primitive operator \(\partial_x^{-1}\); explicitly
\begin{equation}\label{boundary_data}
\begin{aligned}
  &\widehat u_{h}=u_D \quad \text{on }~ \partial\Omega/{\Gamma_T}; \qquad \widehat q_h=q_L \quad \text{ on }~ 
    \Gamma_L; \qquad \widehat q_h=q_R \quad \text{ on }~ 
    \Gamma_R; \qquad \widehat v_h = v_R \quad \text{ on }~ \Gamma_R;\\
    &\widehat v_h = v_T \qquad \text{ on } \Gamma_T; \qquad u_h(x,y,0)=\mathbb P  u_0(x,y) \quad \text{ for all }~(x,y)\in\Omega.
\end{aligned}
\end{equation}

Assembling the element-wise relations \eqref{eq:hdg-local} over the Cartesian mesh and employing the inner-product notation introduced in Section~\ref{sec:notation}, we obtain the following HDG formulation. 
The HDG approximations  $(u_h,q_h,p_h,s_h,v_h,w_h,z_h,r_h,\widehat u_h^V, \widehat u_h^{B}, \widehat q_h^V, \widehat v_h^{R},\widehat v_h^{T} ) \in \bigl[V_h^{k}\bigr]^7 \times \mathcal{V}_h^k(0) \times \widetilde M_h^k( \mathcal H\setminus \Gamma_T)\times \mathcal{V}_h^k(0)\times\widetilde M_h^k( \mathcal V\setminus \Gamma_L) 
\times\widetilde M_h^k( \mathcal H\setminus \Gamma_B)$ of \eqref{eq:local-system-fixed}-\eqref{eq:local-system-fixedb} satisfy
\begin{subequations}\label{eq:hdg-global}
\begin{align}
  (q_h,\phi_q)_{\mathcal T_h}
  + (u_h,\partial_x\phi_q)_{\mathcal T_h}
  -\langle \widehat u_hn_x,\phi_q\rangle_{\partial \mathcal T_h} &= 0, \label{Hdg_qh}
  \\[2pt]
  (p_h,\phi_p)_{\mathcal T_h}
  + (u_h q_h,\partial_x\phi_p)_{\mathcal T_h}
  -\langle \widehat{u_h q_h}n_x,\phi_p\rangle_{\partial \mathcal T_h} &= 0,
  \\[2pt]
  (s_h,\phi_s)_{\mathcal T_h}
  + (u_h,\partial_y\phi_s)_{\mathcal T_h}
  -\langle \widehat u_hn_y,\phi_s\rangle_{\partial \mathcal T_h} &= 0,
  \\[2pt]
  (s_h,\phi_v)_{\mathcal T_h}
  + (v_h,\partial_x\phi_v)_{\mathcal T_h}
  -\langle \widehat v_hn_x,\phi_v\rangle_{\partial \mathcal T_h} &= 0,
  \\[2pt]
  (z_h,\phi_z)_{\mathcal T_h}
  + (r_h,\partial_x\phi_z)_{\mathcal T_h}
  -\langle \widehat r_hn_x,\phi_z\rangle_{\partial \mathcal T_h} &= 0,
  \\[2pt]
  (r_h,\phi_r)_{\mathcal T_h}
  +\bigl(z_h+f(u_h)-p_h,\partial_x\phi_r\bigr)_{ \mathcal T_h} +\bigl(\tfrac12 q_h^{2},\partial_x\phi_r\bigr)_{ \mathcal T_h}  +(v_h,\partial_y\phi_r)_{\mathcal T_h}\nonumber\\
  \qquad
  -\langle \widehat z_h+\widehat{f(u_h)}
             -\widehat p_h,n_x\phi_r
             \rangle_{\partial \mathcal T_h} -\langle\tfrac12\widehat{q_h^{2}},n_x\phi_r
             \rangle_{\partial \mathcal T_h}
 -\langle \widehat v_hn_y,\phi_r\rangle_{\partial\mathcal T_h} &= 0,
  \\[2pt]
  \bigl((u_h)_t,\phi_u\bigr)_{\mathcal T_h}
  + (r_h,\phi_u)_{\mathcal T_h} &= 0,
\end{align}
\end{subequations}
for every  
\( (\phi_u,\phi_q,\phi_p,\phi_s,\phi_v,\phi_z,\phi_r)\in\bigl[V_h^{k}\bigr]^7.
\)
 The auxiliary quantities
\(\widehat p_h,\widehat r_h,\widehat z_h,\widehat{u_h q_h}, \widehat {f(u_h)}\), and $\frac{1}{2}\widehat{q_h^2}$
appearing in \eqref{eq:hdg-global} are prescribed face-wise
by \eqref{Vertical_trace} and \eqref{Horizontal_trace} in terms of the numerical traces \(\widehat u_h^x, \widehat u_h^{y+},\widehat q_h^x, \widehat v_h^{x^-},\widehat v_h^{y^-}\). These globally coupled trace unknowns can be determined by the transmission condition \eqref{eq:weak_TC}.

Since \(f(u)=2\kappa u+\frac32u^{2}\) is the nonlinearity present in the CH–KP
flux,  we define it on every (oriented) face \(F\subset\partial\mathcal T_h\),
\begin{equation}\label{eq:tildetau}
  \tilde\tau(\widehat u_h,u_h)
  :=\frac{1}{(\widehat u_h-u_h)^{2}}
     \int^{u_h}_{\widehat u_h}
       \bigl(f(s)-f( u_h)\bigr)n_x\mathrm ds.
\end{equation}
Therefore, we have
\[
  \bigl\lvert\tilde\tau(\widehat u_h,u_h)\bigr\rvert
  \;\le\;
  \tfrac12\sup_{s\in J(\widehat u_h,u_h)}
            \bigl\lvert f'(s)\bigr\rvert ,
\]
where $J(\widehat u_h,u_h) = [\min(u_h,\widehat u_h),
                     \max(u_h,\widehat u_h)]$. In order to ensure stability, we need the point-wise inequality
\(
  \tau_f-\tilde\tau\ge0
\)
(the first component of our collective stabilization function), for which it
is sufficient to choose, e.g.
\(
  \tau_f=\sup_{s\in J(\widehat u_h,u_h)}\tfrac12\bigl\lvert f'( s)\bigr\rvert+\varepsilon
\) for some $\varepsilon>0$. Additionally, we assume that the stabilization parameters satisfy the following global assumption.
\begin{assumption}[A global assumption on the stabilization parameters]\label{globalass}
We assume that the stabilization parameters in the scheme \eqref{Vertical_trace}-\eqref{Horizontal_trace} satisfy
    \begin{align}
  \tau_f-\tilde\tau \geq C_l,\quad -\tau_{zpu}^+\geq 0, \quad -\tau_{zpu}^-\geq 0, \quad (\tau_{zpv}^-)^2\leq -2\tau_{zpu}^-, \quad -\tau_{uqq} \geq C\delta, \label{ass3}
  \end{align}
  for some positive constants $C$ and $C_l$ (we choose $C_l$ large enough and $\delta >0$ small enough for error analysis).
\end{assumption}

Please note that for the stability analysis, we do not need the strict global Assumption \ref{globalass} completely; we can relax this and take $\tau_f = \sup_{s\in J(\widehat u_h,u_h)}\tfrac12\bigl\lvert f'( s)\bigr\rvert+\varepsilon$, as mentioned earlier, and $-\tau_{uqq}\geq 0$, along with other assumptions.
\subsection{Stability analysis}
Define the discrete energy functional
\[
  \mathcal E_h(t):=\frac12\Bigl(
        \|u_h(t)\|_{\mathcal T_h}^2+\|q_h(t)\|_{\mathcal T_h}^2\Bigr), \qquad\qquad \forall t\geq 0.
\]
\begin{lemma}[Stability lemma]\label{lem:stability}
Assume that the boundary condition in \eqref{boundary_data} is set to be homogeneous, i.e., $u_D=q_L=q_R=v_R=v_T =0$ and that the stabilization parameters satisfy Assumption \ref{globalass}.
Then the HDG approximation
\(
  (u_h,q_h,p_h,s_h,v_h,z_h,r_h),
\)
obtained from \eqref{eq:hdg-local}-\eqref{boundary_data} satisfies the energy inequality
\begin{equation}\label{Energy_stab}
  \frac{\mathrm d}{\mathrm dt}\mathcal E_h(t)\le 0,
  \qquad\text{for all }t>0 .
\end{equation}
Consequently, $\mathcal E_h(t)\le \mathcal E_h(0)$ and the scheme is
$L^2$–stable.
\end{lemma}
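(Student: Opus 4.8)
The plan is to differentiate the discrete energy and reduce it, via the auxiliary equations, to a compact expression whose volume part mirrors the continuous conservation identity for the momentum $E[u]$ (obtained by multiplying \eqref{eq:evo} by $(1-\partial_x^2)u$) and whose boundary part is controlled by the stabilization. First I would write $\frac{d}{dt}\mathcal E_h(t) = (u_h,(u_h)_t)_{\mathcal T_h} + (q_h,(q_h)_t)_{\mathcal T_h}$. Testing the evolution equation \eqref{eq:hdg-local:h} with $\phi_u=u_h$ gives $(u_h,(u_h)_t)_{\mathcal T_h}=-(u_h,r_h)_{\mathcal T_h}$, and since $(u_h)_t+r_h\in Q_k(K_{ij})$ annihilates all test functions I may use the pointwise identity $(u_h)_t=-r_h$ together with its trace counterpart $\widehat r_h=-(\widehat u_h)_t$ from \eqref{Vertical_trace}. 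Differentiating \eqref{eq:hdg-local:a} in time, testing with $\phi_q=q_h$, and substituting these relations yields $(q_h,(q_h)_t)_{\mathcal T_h}=(r_h,\partial_x q_h)_{\mathcal T_h}-\langle\widehat r_h n_x,q_h\rangle_{\partial\mathcal T_h}$, whose right-hand side equals $-(z_h,q_h)_{\mathcal T_h}$ by \eqref{eq:hdg-local:f} with $\phi_z=q_h$. Hence
\begin{equation*}
\frac{d}{dt}\mathcal E_h(t) = -(u_h,r_h)_{\mathcal T_h} - (z_h,q_h)_{\mathcal T_h}.
\end{equation*}

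Next I would evaluate the two surviving inner products. Testing the flux equation \eqref{eq:hdg-local:g} with $\phi_r=u_h$ expresses $(u_h,r_h)_{\mathcal T_h}$ through the volume contributions of $z_h$, $f(u_h)$, $p_h=(u_hq_h)_x$, $\tfrac12 q_h^2$ and $v_h$, plus the associated numerical-trace terms, while testing \eqref{eq:hdg-local:a} with $\phi_q=z_h$ rewrites $(z_h,q_h)_{\mathcal T_h}$. Integrating by parts element-wise and adding the two, the mixed $z_h$--$u_h$ volume terms combine into pure face terms $\langle (u_h-\widehat u_h)(z_h-\widehat z_h)n_x,1\rangle_{\partial\mathcal T_h}$ up to single-valued trace products that telescope. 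The genuinely nonlinear volume terms must then cancel exactly, reproducing the continuous computation in which the $f(u)$ contribution integrates to a perfect $x$-derivative and the $(u_hq_h)_x$ and $\tfrac12(q_h^2)_x$ contributions annihilate one another; the transverse term carrying $v_h$ is converted, through \eqref{eq:hdg-local:d} and \eqref{eq:hdg-local:c}, into a boundary term that vanishes under the homogeneous data. I expect this exact discrete nonlinear cancellation, with the specific choice $\widehat{u_hq_h}=u_h\bigl(\tfrac{\widehat q_h+q_h}{2}\bigr)+\tau_{uqq}(\widehat q_h-q_h)n_x$ and $\tfrac12\widehat{q_h^2}=\tfrac12\widehat q_h^2$, to be the delicate part of the argument.

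Finally, I would collect all face terms. Using the single-valuedness of $\widehat u_h,\widehat q_h,\widehat v_h$ and the weak transmission conditions \eqref{eq:weak_TC}, the interior contributions of the continuous fluxes telescope, and the homogeneous boundary conditions \eqref{boundary_data} eliminate the remaining boundary integrals. What is left is a sum of stabilization terms which, with the trace definitions \eqref{Vertical_trace}--\eqref{Horizontal_trace}, assemble into quadratic forms in the jumps $(\widehat u_h-u_h)$, $(\widehat v_h-v_h)$ and $(\widehat q_h-q_h)$. The nonlinear $f$-flux is controlled through the mean-value representation \eqref{eq:tildetau}, yielding the sign-definite contribution $\langle(\tau_f-\tilde\tau)(\widehat u_h-u_h)^2,1\rangle_{\partial\mathcal T_h}\ge0$; the mixed $(\widehat u_h-u_h,\widehat v_h-v_h)$ form on $\Gamma_{i,j}^R$ is rendered negative semidefinite by completing the square under $-\tau_{zpu}^-\ge0$ and $(\tau_{zpv}^-)^2\le-2\tau_{zpu}^-$, while $-\tau_{uqq}\ge0$ handles the $(\widehat q_h-q_h)$ term. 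Under Assumption~\ref{globalass} each contribution has the correct sign, so $\frac{d}{dt}\mathcal E_h(t)\le0$, which proves the claim; the main obstacle is precisely the combined step of verifying the exact cancellation of the nonlinear volume terms and re-summing the residual face terms into a manifestly non-positive quadratic form.
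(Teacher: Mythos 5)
Your proposal is correct and follows essentially the same route as the paper's proof: the paper differentiates the $q$-equation in time, tests it with $q_h$, and simultaneously chooses the test functions $(\phi_u,\phi_q,\phi_p,\phi_s,\phi_v,\phi_z,\phi_r)=(u_h-\partial_xq_h,\,p_h-z_h,\,-q_h,\,-v_h,\,v_h,\,q_h,\,-u_h)$, which after integration by parts, the transmission conditions, and the homogeneous boundary data reduces everything to the same four sign-definite face groups ($\mathcal I_f$ via the mean-value quantity $\tilde\tau$, the completed square in $(\widehat u_h-u_h,\widehat v_h-v_h)$ on right faces, the $\tau_{uqq}$ term, and the vanishing horizontal contribution) that you identify. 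Your sequential reduction to $\tfrac{d}{dt}\mathcal E_h(t)=-(u_h,r_h)_{\mathcal T_h}-(z_h,q_h)_{\mathcal T_h}$ via the pointwise identity $(u_h)_t=-r_h$ is just a reorganization of the same test-function bookkeeping, and the remaining cancellations and sign analysis you outline match the paper's.
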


\begin{proof}
Differentiating \eqref{Hdg_qh} with respect to time  and setting $\phi_q = q_h$ yields
\begin{equation}\label{Hdg_qht}
  \bigl((q_h)_t,q_h\bigr)_{\mathcal T_h}
  +\bigl((u_h)_t,\partial_x q_h\bigr)_{\mathcal T_h}
  -\langle (\widehat u_h)_t n_x,q_h\rangle_{\partial\mathcal T_h}=0 ,
\end{equation}
where we employ the inner-product notation from \eqref{innerprod}-\eqref{innerprod2}.
Next, choose the test functions
\[
  (\phi_u,\phi_q,\phi_p,\phi_s,\phi_v,\phi_z,\phi_r)
  =(u_h-\partial_xq_h,p_h-z_h,-q_h,-v_h,v_h,q_h,-u_h),
\]
in the system \eqref{eq:hdg-global}, and adding the equations in the resulting system and \eqref{Hdg_qht} yields
    \begin{equation}
        \begin{split}
   &         \big((u_{h})_t,u_h\big)_{\mathcal T_h}+\big((q_{h})_t,q_h\big)_{\mathcal T_h}
      -\langle(\widehat u_{h})_tn_x,q_h\rangle_{\partial\mathcal T_h} 
  +(u_h,\partial_xp_h)_{\mathcal T_h}-\langle \widehat u_hn_x,p_h\rangle_{\partial \mathcal T_h}\\
  &\quad- (u_h,\partial_xz_h)_{\mathcal T_h}+\langle \widehat u_hn_x,z_h\rangle_{\partial \mathcal T_h}
  - (u_h q_h,\partial_xq_h)_{\mathcal T_h}
  +\langle \widehat{u_h q_h}n_x,q_h\rangle_{\partial \mathcal T_h} 
  - (u_h,\partial_yv_h)_{\mathcal T_h}\\
  &\quad+\langle \widehat u_hn_y,v_h\rangle_{\partial \mathcal T_h}
  + (v_h,\partial_x v_h)_{\mathcal T_h}
  -\langle \widehat v_hn_x,v_h\rangle_{\partial \mathcal T_h}
  - (v_h,\partial_yu_h)_{\mathcal T_h}
  +\langle \widehat v_hn_y,u_h\rangle_{\partial \mathcal T_h}
  \\
  &\quad
  -\langle \widehat r_hn_x,q_h\rangle_{\partial \mathcal T_h} 
  -\bigl(z_h+f(u_h)-p_h,\partial_xu_h\bigr)_{ \mathcal T_h}\nonumber +\bigl(\tfrac12 q_h^{2},\partial_xu_h\bigr)_{ \mathcal T_h}\\&\quad
  +\langle \widehat z_h+\widehat{f(u_h)}
             -\widehat p_h,n_xu_h
             \rangle_{\partial \mathcal T_h} +\tfrac12\langle\widehat{q_h^{2}}n_x,u_h
             \rangle_{\partial \mathcal T_h} = 0.
        \end{split}
    \end{equation}
  Applying multiple integrations by parts yields
    \begin{equation}
        \begin{split}
   &         \mathcal E_h(t)
      -\langle(\widehat u_{h})_tn_x,q_h\rangle_{\partial\mathcal T_h} 
  +\langle u_h,n_xp_h\rangle_{\partial \mathcal T_h}-\langle \widehat u_hn_x,p_h\rangle_{\partial \mathcal T_h}- \langle u_h,n_xz_h\rangle_{\partial \mathcal T_h}\\
  &\quad+\langle \widehat u_hn_x,z_h\rangle_{\partial \mathcal T_h}
  - \Big\langle u_h, n_x\frac{1}{2}(q_h)^2\Big\rangle_{\partial\mathcal T_h}
  +\langle \widehat{u_h q_h},n_xq_h\rangle_{\partial \mathcal T_h} 
  - \langle u_h,n_yv_h\rangle_{\partial \mathcal T_h}\\
  &\quad+\langle \widehat u_hn_y,v_h\rangle_{\partial \mathcal T_h}
  + \frac{1}{2}\langle v_h, n_x v_h\rangle_{\partial \mathcal T_h}
  -\langle \widehat v_h,n_xv_h\rangle_{\partial \mathcal T_h}
  +\langle \widehat v_hn_y,u_h\rangle_{\partial \mathcal T_h}
  \\
  &\quad
  -\langle \widehat r_hn_x,q_h\rangle_{\partial \mathcal T_h} 
  -\bigl(f(u_h),\partial_xu_h\bigr)_{ \mathcal T_h}
  +\langle \widehat z_h+\widehat{f(u_h)}
             -\widehat p_h,n_xu_h
             \rangle_{\partial \mathcal T_h}  +\langle\tfrac12\widehat{q_h^{2}},n_xu_h
             \rangle_{\partial \mathcal T_h}= 0.
        \end{split}
    \end{equation}
  The transmission condition \eqref{eq:weak_TC}, and the boundary conditions \eqref{boundary_data} lead to
    \begin{equation}\label{energy}
        \begin{split}
   &         \mathcal E_h(t) \underbrace{-\bigl(f(u_h),\partial_xu_h\bigr)_{\mathcal T_h}-\langle\widehat{f(u_h)},(\widehat u_h -u_h)n_x\rangle_{\partial \mathcal T_h}}_{\mathcal I_f}
  \\
  &\quad  \underbrace{- \langle\widehat u_h-u_h, ((\widehat z_h- \widehat p_h)-(z_h-p_h))n_x\rangle_{\partial \mathcal T_h}+\frac{1}{2}\langle(\widehat v_h -v_h)^2,n_x\rangle_{\partial \mathcal T_h} }_{\mathcal I_{u_x}}+\frac{1}{2}\widehat v_h ^2|_{\Gamma_L}
  \\
  &\quad +\underbrace{\left\langle \frac{1}{2}\widehat{q_h^2}-\frac{1}{2}q_h^2,u_hn_x\right\rangle_{\partial \mathcal T_h}  + \langle\widehat {u_hq_h},q_hn_x\rangle_{\partial \mathcal T_h}}_{\mathcal I_{q}} \underbrace{-\langle\widehat u_h-u_h, (\widehat v_h-v_h)n_y\rangle_{\partial \mathcal T_h}}_{\mathcal I_{u_y}}
 = 0.
     \end{split}
    \end{equation}
   Let $G$ be an antiderivative of $f$, i.e. $G'(s)=f(s)$.  Since
    \begin{equation*}
    -\bigl(f(u_h),\partial_xu_h\bigr)_{\mathcal T_h}\nonumber = -\bigl(
        \partial_xG(u_h),1\bigr)_{\mathcal T_h}\nonumber = -\langle G(u_h),n_x\rangle_{\partial \mathcal T_h}=-\left\langle\int_{\widehat u_h }^{u_h}f(s)ds,n_x
        \right\rangle_{\partial \mathcal T_h},
    \end{equation*}
   we obtain
    \begin{align}\label{f_estimate}
        \mathcal{I}_f&=-\left\langle\int_{\widehat u_h }^{u_h}(f(s)-f(u_h ))ds,n_x\right\rangle_{\partial \mathcal T_h}- \left\langle\widehat{f(u_h)}-f( u_h ),(\widehat u_h -u_h)n_x\right\rangle_{\partial \mathcal T_h}\nonumber\\
        &=\langle \tau_f-\tilde \tau, (n_x)^2 (\widehat u_h -u_h)^2\rangle_{\partial\mathcal T_h},
    \end{align}
    where we employed \eqref{eq:tildetau}. Using the transmission condition \eqref{eq:weak_TC} and numerical traces \eqref{Vertical_trace}, we evaluate $\mathcal{I}_q$ as follows
\begin{align*}
  \mathcal I_q
  &=\Bigl\langle \tfrac12(\widehat q_h)^2-\tfrac12 q_h^2,
          u_h n_x\Bigr\rangle_{\partial \mathcal T_h}
    -\langle \widehat{u_h q_h},(\widehat q_h-q_h)n_x\rangle_{\partial \mathcal T_h}\\
  &=\left\langle u_h\left( \frac{\widehat q_h+q_h}{2}\right)
         -\widehat{u_h q_h},(\widehat q_h-q_h)n_x\right\rangle_{\partial \mathcal T_h}=-\langle \tau_{uqq},(n_x)^2(\widehat q_h-q_h)^2\rangle_{\partial\mathcal T_h}.
\end{align*}
We evaluate $\mathcal{I}_{u_x}$ by applying \eqref{Vertical_trace}, which implies
\begin{align*}
  \mathcal I_{u_x}&=-\Bigl\langle \tau_{zpu}^+,
               (\widehat u_h-u_h)^2\Bigr\rangle_{L}-\langle \tau_{zpu}^-,
            (\widehat u_h-u_h)^2\rangle_{R}-\langle \tau_{zpv}^-,
            (\widehat u_h-u_h)(\widehat v_h^{x^-}-v_h)\rangle_{R}\\& \qquad
  +\Bigl\langle \frac{1}{2},
               (\widehat v_h^{x^-}-v_h)^2\Bigr\rangle_{R},
\end{align*}
    We finally evaluate $\mathcal{I}_{u_y}$ using \eqref{Horizontal_trace}, which yields
    \begin{align*}
        \mathcal I_{u_y} = -\langle\widehat u_h-u_h, (\widehat v_h-v_h)n_y\rangle_{B}- \langle\widehat u_h-u_h, (\widehat v_h-v_h)n_y\rangle_{T} = 0.
    \end{align*}
Invoking the assumptions \eqref{ass3} yields
\begin{align}\label{energy_2}
    \mathcal{I}_f + \mathcal{I}_{u_x} + \mathcal{I}_{q} + \mathcal I_{u_y} \geq 0.
\end{align}
Therefore, from \eqref{energy} and \eqref{energy_2}, it follows that
\[
  \frac{\mathrm{d}}{\mathrm{d}t} \mathcal{E}_h(t) \leq 0, \quad \text{for all } t > 0,
\]
which completes the proof.
\end{proof}


\section{Error analysis}\label{sec:error}
We assume homogeneous boundary conditions~\eqref{boundary_data} and that Assumption~\ref{globalass} holds.  
Let \((u_h, q_h, p_h, s_h, v_h, z_h, r_h)\) be the semi-discrete HDG approximation defined by 
\eqref{eq:hdg-local}--\eqref{boundary_data}.  
This section establishes an \textit{a priori} \(L^2\)-error estimate, demonstrating convergence of order \(h^{k+1/2}\).

The analysis relies on suitably defined projection operators.  
For an interval \(I:=(a,b)\) and \(\omega\in L^2(I)\), denote by \(\mathcal{P}\omega\in P_k(I)\) the standard \(L^2\)-projection, which satisfies
\[
\int_{I}(\mathcal{P}\omega-\omega)v\,\mathrm{d}x = 0,\qquad \forall v\in P_k(I).
\]
When \(\omega\) has additional regularity, \(\omega\in H^{\frac12+\epsilon}(I)\), $\epsilon>0$, we also use the one-sided projections \(\mathcal{P}^\pm\omega\in P_k(I)\) defined by
\begin{align*}
\int_{I}(\mathcal{P}^+\omega-\omega)v\,\mathrm{d}x &= 0,\;\forall v\in P_{k-1}(I),\quad \mathcal{P}^+\omega(a^+)=\omega(a),\\
\int_{I}(\mathcal{P}^-\omega-\omega)v\,\mathrm{d}x &= 0,\;\forall v\in P_{k-1}(I),\quad \mathcal{P}^-\omega(b^-)=\omega(b).
\end{align*}
These one-dimensional projections are well-defined for \(\omega\in H^{\frac12+\epsilon}(I)\); see \cite{cockburn2001superconvergence}.

On a rectangular cell \(K_{ij}\), define the tensor-product \(L^2\)-projection \(\mathbb{P}:=\mathcal{P}_x\otimes\mathcal{P}_y\) onto \(Q_k(K_{ij})\), where \(\mathcal{P}_x\) and \(\mathcal{P}_y\) are the standard $L^2$-projections in one dimension on the \(x\)- and \(y\)-directions, respectively.  
For \(\omega\in L^2(K_{ij})\) the projection error $\mu^{\omega}:=\mathbb{P}\omega-\omega$ satisfies
\begin{equation}\label{eq:proj-moment}
(\mu^{\omega},\psi_\omega)_{K_{ij}} = 0,\qquad \forall\psi_\omega\in Q_k(K_{ij}).
\end{equation}
For smoother functions, \(\omega\in H^2(K_{ij})\), we introduce the composite projections
\[
\Pi^{\pm}\omega:=\mathcal{P}_x^{\pm}\otimes\mathcal{P}_y^{\pm}\omega,
\]
well-defined in \(Q_k(K_{ij})\) \cite[Section~3.2]{cockburn2001superconvergence}. 
For  \(u,v\in H^2(K_{ij})\), we use the projection \(\Pi^-u,\Pi^+v\in Q_k(K_{ij})\).  
The projections \(\Pi^-u\) and \(\Pi^+v\) satisfy the following orthogonality and trace properties \cite{XuShu2007}:
\begin{equation}\label{proj_ortho_mod}
\begin{aligned}
(\eta^\omega, \phi_\omega)_{K_{ij}} = 0, \qquad \forall \phi_\omega\in (P_{k}(I_i)\otimes P_{k-1}(J_j)) \cup (P_{k-1}(I_i)\otimes P_{k}(J_j)), \quad \omega\in \{u,v\},
\end{aligned}
\end{equation}
and
\begin{equation}\label{eq:proj-vert-left}
\begin{aligned}
\langle \eta^v, \phi_v \rangle_{\Gamma_{i,j}^L} &= 0, \qquad
\langle \eta^v, \phi_v \rangle_{\Gamma_{i,j}^B} = 0, &\quad \forall \phi_v\in Q_k(K_{ij}),\\
\langle \eta^u, \phi_u \rangle_{\Gamma_{i,j}^R} &= 0,\qquad 
\langle \eta^u, \phi_u \rangle_{\Gamma_{i,j}^T} = 0, &\quad \forall \phi_u\in Q_k(K_{ij}),
\end{aligned}
\end{equation}
where 
$$ \eta^v:= \Pi^+ v-v, \qquad \eta^u: = \Pi^- u-u. $$  Throughout the analysis, we denote the projection errors for the seven variables by
\[
\eta^u,\;\mu^q,\;\mu^p,\;\mu^s,\;\eta^v,\;\mu^z,\;\mu^r,
\]
corresponding to \(u,q,p,s,v,z,r\), respectively.
\begin{lemma}[Inverse inequalities \cite{Ciarlet1978Fem,XuShu2007}]\label{lem:inverse_2d}
For every $v_h\in V_h^{k}$ $(k\ge0)$, there exists a positive constant $C$ independent
of $v_h$ and $h$, such that the following estimates hold
\begin{equation}\label{invesre}
\begin{aligned}
\|\partial_x v_h\|_{\mathcal T_h}
  &\le Ch^{-1}\|v_h\|_{\mathcal T_h},
&
\|\partial_y v_h\|_{\mathcal T_h}
  &\le Ch^{-1}\|v_h\|_{\mathcal T_h},
\\[2pt]
\|v_h\|_{\partial \mathcal T_h}
  &\le Ch^{-\frac{1}{2}}\|v_h\|_{\mathcal T_h}, & \|v_h\|_{L^\infty(\mathcal T_h)}
  &\le Ch^{-1}\|v_h\|_{\mathcal T_h}.
  \end{aligned}
\end{equation}

\end{lemma}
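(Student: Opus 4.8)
The plan is to reduce each of the four estimates to a single element by an affine change of variables to a fixed reference cell, then invoke the equivalence of all norms on the finite-dimensional polynomial space $Q_k$ and track the powers of the mesh size that the scaling produces. Throughout I assume the Cartesian mesh has bounded aspect ratio (quasi-uniformity), so that $|I_i|\simeq|J_j|\simeq h$ for every cell; this is what allows the cell-wise factors $|I_i|$ and $|J_j|$ to be replaced by the single parameter $h$.

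Fix an element $K_{ij}=I_i\times J_j$ and let $F_{ij}:\hat K\to K_{ij}$ be the affine map from the reference square $\hat K:=[-1,1]^2$, with $|I_i|=h_x^{ij}$ and $|J_j|=h_y^{ij}$. Pulling back, $v_h\circ F_{ij}=:\hat v\in Q_k(\hat K)$, the $L^2$ norms satisfy $\|v_h\|_{K_{ij}}^2=\tfrac14 h_x^{ij}h_y^{ij}\,\|\hat v\|_{\hat K}^2$, the chain rule gives $\partial_x v_h = \tfrac{2}{h_x^{ij}}\,(\partial_{\hat x}\hat v)\circ F_{ij}^{-1}$, and the boundary measure scales by one power of the cell size. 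Since $\hat K$ is fixed and $Q_k(\hat K)$ is finite dimensional, all norms on it are equivalent; in particular there is a constant $\hat C=\hat C(k)$ with
\[
\|\partial_{\hat x}\hat v\|_{\hat K}+\|\partial_{\hat y}\hat v\|_{\hat K}+\|\hat v\|_{\partial\hat K}+\|\hat v\|_{L^\infty(\hat K)}\le \hat C\,\|\hat v\|_{\hat K}, \qquad \hat v\in Q_k(\hat K).
\]

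First I would prove the derivative bounds: combining the chain-rule factor $2/h_x^{ij}$ with the two $L^2$ scalings gives $\|\partial_x v_h\|_{K_{ij}}\le \hat C\,(h_x^{ij})^{-1}\|v_h\|_{K_{ij}}\le C h^{-1}\|v_h\|_{K_{ij}}$, using $h_x^{ij}\simeq h$; the $\partial_y$ estimate is identical with $h_y^{ij}$. For the trace bound I would use that $\partial\hat K$ carries one power fewer of the cell size than $\hat K$, so the scaling produces exactly an $h^{-1/2}$ factor, giving $\|v_h\|_{\partial K_{ij}}\le C h^{-1/2}\|v_h\|_{K_{ij}}$. For the last bound the $L^\infty$ norm is scale-invariant while the $L^2$ norm carries the full cell volume $\tfrac14 h_x^{ij}h_y^{ij}\simeq h^2$, so $\|v_h\|_{L^\infty(K_{ij})}\le \hat C\,(\tfrac14 h_x^{ij}h_y^{ij})^{-1/2}\|v_h\|_{K_{ij}}\le C h^{-1}\|v_h\|_{K_{ij}}$. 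Summing the squares of the first three cell-wise estimates over all $K_{ij}\in\mathcal T_h$ yields the global $\mathcal T_h$- and $\partial\mathcal T_h$-norm inequalities, while taking the maximum in the last one and bounding $\max_{ij}\|v_h\|_{K_{ij}}\le\|v_h\|_{\mathcal T_h}$ gives the $L^\infty(\mathcal T_h)$ estimate.

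The computations are entirely routine; the only point requiring care is the bookkeeping of the anisotropic scaling factors $h_x^{ij}$, $h_y^{ij}$ and their replacement by the global $h$. This replacement is where the (implicit) quasi-uniformity of the Cartesian mesh is used: without a bounded aspect ratio one would only obtain the sharper but less convenient anisotropic bounds $\|\partial_x v_h\|_{K_{ij}}\le C (h_x^{ij})^{-1}\|v_h\|_{K_{ij}}$ and so on. Because the tensor-product space factorizes, an alternative that avoids the reference-cell map is to apply the one-dimensional inverse inequality in the relevant coordinate and integrate in the other, which reproduces the same constants.
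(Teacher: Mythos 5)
Your proof is correct. The paper itself does not prove this lemma at all --- it is quoted as a standard result with citations to Ciarlet and to Xu--Shu --- so there is nothing to compare against beyond noting that your reference-cell scaling argument (affine pullback to $\hat K=[-1,1]^2$, norm equivalence on the finite-dimensional space $Q_k(\hat K)$, and bookkeeping of the Jacobian, chain-rule, and edge-measure factors) is precisely the textbook proof those references contain. You are also right to flag that replacing the cell-wise factors $h_x^{ij},h_y^{ij}$ by the global $h$ requires quasi-uniformity, since the paper defines $h$ as a maximum over cells; this hypothesis is implicit in the paper's ``regular Cartesian mesh'' assumption and your handling of it is appropriate.
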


\begin{lemma}[Interpolation inequality \cite{cockburn2001superconvergence,Ciarlet1978Fem}]\label{lem:approx_2d}
For any $\omega\in H^{k+1}(\mathcal{T}_h)$, $k\geq 1$, there exists a constant
$C>0$, independent of $h$, such that
\begin{equation}\label{eq:proj-est}
     \|\mu^\omega\|_{\mathcal T_h}
  +
  h\|\mu^\omega\|_{L^\infty(\mathcal T_h)}
  +
  h^{\frac12}\|\mu^\omega\|_{\partial\mathcal T_h}
  \le
  Ch^{k+1}\qquad \text{ for } \omega \in \{q,p,s,z,r\}.
\end{equation}
\end{lemma}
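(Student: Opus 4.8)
The estimate is the standard approximation property of the tensor-product $L^2$-projection $\mathbb P=\mathcal P_x\otimes\mathcal P_y$ on a Cartesian mesh, and the plan is to prove it cell by cell through a reference-element argument followed by scaling. I would fix a cell $K_{ij}=I_i\times J_j$ and pull it back to the unit square $\hat K=(0,1)^2$ via the affine map $F_{ij}(\hat x,\hat y)=(x_{i-1}+|I_i|\hat x,\,y_{j-1}+|J_j|\hat y)$, whose Jacobian $B$ satisfies $\|B\|\le Ch$, $\|B^{-1}\|\le Ch^{-1}$, and $|\det B|\sim h^2$. The idea is to establish all three bounds on $\hat K$, where the mesh size plays no role, and only afterwards reintroduce the powers of $h$.

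On $\hat K$, let $\hat{\mathbb P}$ be the $L^2$-projection onto $Q_k(\hat K)$. The structural fact driving everything is that $\hat{\mathbb P}$ reproduces $P_k(\hat K)\subset Q_k(\hat K)$, so the linear map $\hat\omega\mapsto\hat\omega-\hat{\mathbb P}\hat\omega$ annihilates every polynomial of total degree at most $k$. I would therefore invoke the Bramble--Hilbert lemma in three target norms. For the $L^2(\hat K)$-norm, boundedness of $\hat{\mathbb P}$ on $L^2(\hat K)$ gives $\|\hat\omega-\hat{\mathbb P}\hat\omega\|_{L^2(\hat K)}\le C|\hat\omega|_{H^{k+1}(\hat K)}$. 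For the $L^\infty(\hat K)$-norm, the same vanishing-on-$P_k$ argument yields $\|\hat\omega-\hat{\mathbb P}\hat\omega\|_{L^\infty(\hat K)}\le C|\hat\omega|_{H^{k+1}(\hat K)}$; this is exactly where the hypothesis $k\ge1$ is used, since it guarantees the two-dimensional embedding $H^{k+1}(\hat K)\hookrightarrow L^\infty(\hat K)$ needed to make the operator bounded into $L^\infty$. For the boundary norm, the trace theorem on $\hat K$ similarly gives $\|\hat\omega-\hat{\mathbb P}\hat\omega\|_{L^2(\partial\hat K)}\le C|\hat\omega|_{H^{k+1}(\hat K)}$. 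Each estimate is the familiar ``bounded operator vanishing on $P_k$'' argument of \cite{cockburn2001superconvergence,Ciarlet1978Fem}.

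The remaining step is bookkeeping of the scaling exponents. Since $\mu^\omega|_{K_{ij}}=(\hat\omega-\hat{\mathbb P}\hat\omega)\circ F_{ij}^{-1}$, affine scaling of Sobolev seminorms gives $|\hat\omega|_{H^{k+1}(\hat K)}\le Ch^{k}|\omega|_{H^{k+1}(K_{ij})}$, while the norms transform as $\|\cdot\|_{L^2(K_{ij})}\sim h\|\cdot\|_{L^2(\hat K)}$, $\|\cdot\|_{L^\infty(K_{ij})}=\|\cdot\|_{L^\infty(\hat K)}$ (scale invariance), and $\|\cdot\|_{L^2(\partial K_{ij})}\sim h^{1/2}\|\cdot\|_{L^2(\partial\hat K)}$ (a face being one-dimensional). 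Combining these yields the per-cell bounds $\|\mu^\omega\|_{L^2(K_{ij})}\le Ch^{k+1}|\omega|_{H^{k+1}(K_{ij})}$, $\|\mu^\omega\|_{L^\infty(K_{ij})}\le Ch^{k}|\omega|_{H^{k+1}(K_{ij})}$, and $\|\mu^\omega\|_{L^2(\partial K_{ij})}\le Ch^{k+1/2}|\omega|_{H^{k+1}(K_{ij})}$. Summing the $L^2$ and boundary estimates over all cells and taking the maximum for the $L^\infty$ estimate, then absorbing the regularity factor $\|\omega\|_{H^{k+1}(\mathcal T_h)}$ into the constant $C$, produces the three contributions $\|\mu^\omega\|_{\mathcal T_h}$, $h\|\mu^\omega\|_{L^\infty(\mathcal T_h)}$, and $h^{1/2}\|\mu^\omega\|_{\partial\mathcal T_h}$, each bounded by $Ch^{k+1}$, as claimed. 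There is no serious obstacle here; the only points demanding care are getting the three scaling exponents ($h$, $1$, $h^{1/2}$) right and noting that the $L^\infty$ bound genuinely requires $k\ge1$. As an equivalent route one may instead exploit the tensor structure directly, writing $I-\mathbb P=(I-\mathcal P_x)+\mathcal P_x(I-\mathcal P_y)$ and applying one-dimensional projection estimates factorwise, which is the approach underlying \cite{cockburn2001superconvergence}.
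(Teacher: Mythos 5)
The paper does not actually prove this lemma; it is quoted as a standard approximation property of the tensor-product $L^2$-projection with a pointer to \cite{cockburn2001superconvergence,Ciarlet1978Fem}. Your reference-element/Bramble--Hilbert argument, with the scaling exponents $h$, $1$, and $h^{1/2}$ for the volume, $L^\infty$, and face norms respectively, is precisely the standard derivation underlying those references and is correct — including the correct identification that the hypothesis $k\ge 1$ is what makes the two-dimensional embedding $H^{k+1}(\hat K)\hookrightarrow L^\infty(\hat K)$ (and hence the $L^\infty$ bound) available.
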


\begin{theorem}[See \cite{cockburn2001superconvergence,Ciarlet1978Fem,XuShu2007}]\label{thm_mod_proj}
For $\omega \in H^{k+1}(\mathcal T_h)$, $k\geq 1$, there exists a constant $C > 0$ independent of $h$ such that
\begin{align}\label{mod_proj_est}
\| \eta^\omega \|_{\mathcal T_h} +h\| \eta^\omega \|_{L^\infty(\mathcal T_h)} +h^{\frac{1}{2}}\| \eta^\omega \|_{\partial\mathcal T_h} \leq C h^{k+1} |\omega|_{H^{k+1}(\mathcal T_h)}, \qquad \text{ for } \omega \in \{u,v\}.
\end{align}
\end{theorem}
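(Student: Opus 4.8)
The plan is to reduce the estimate to the reference cell $\hat K := [0,1]^2$ by an affine change of variables, prove the three bounds there by a Bramble--Hilbert argument, and transfer them back to each $K_{ij}$ with the correct powers of $h$ before summing over the mesh. Since $\Pi^{\pm} = \mathcal P_x^{\pm}\otimes\mathcal P_y^{\pm}$ is a tensor product of the one-dimensional one-sided (Gauss--Radau) projections, I would first record the one-dimensional facts from \cite{cockburn2001superconvergence}: on an interval the projections $\mathcal P^{\pm}$ are well defined on $H^{1/2+\epsilon}$, reproduce $P_k$, and are bounded in the relevant norms. The whole argument is identical for $\Pi^- u$ and $\Pi^+ v$, so I would treat a generic sign.

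First I would verify that, on $\hat K$, the operator $\hat\Pi^{\pm}$ is a bounded linear map $H^{k+1}(\hat K)\to Q_k(\hat K)$ and that it reproduces $Q_k(\hat K)$, hence in particular the total-degree space $P_k(\hat K)$. Boundedness is where the hypothesis $k\ge 1$ enters: the one-sided projections involve endpoint and, after tensoring, corner and edge evaluations, and these are controlled because $H^{k+1}(\hat K)\hookrightarrow C^0(\overline{\hat K})$ in two dimensions for $k\ge1$. Reproduction of $Q_k$ follows since $\mathcal P_y^{\pm}$ leaves any element of $P_k$ in $y$ invariant and $\mathcal P_x^{\pm}$ then leaves the result invariant in $x$. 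With boundedness and polynomial reproduction in hand, the Bramble--Hilbert lemma yields
\[
\|\hat\eta^{\hat\omega}\|_{L^2(\hat K)}
+\|\hat\eta^{\hat\omega}\|_{L^\infty(\hat K)}
+\|\hat\eta^{\hat\omega}\|_{L^2(\partial\hat K)}
\le C\,|\hat\omega|_{H^{k+1}(\hat K)},
\]
where $\hat\eta^{\hat\omega}=\hat\Pi^{\pm}\hat\omega-\hat\omega$ and the $L^\infty$ and trace norms are legitimate by the same embedding.

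Next I would scale back to a cell $K_{ij}$ of side lengths $h_x,h_y\le h$ through the affine map $\hat K\to K_{ij}$. A standard computation gives $|\hat\omega|_{H^{k+1}(\hat K)}\simeq h^{k}\,|\omega|_{H^{k+1}(K_{ij})}$, while the $L^2$, $L^\infty$, and boundary $L^2$ norms pick up factors $h$, $1$, and $h^{1/2}$, respectively. Combining these produces, on each cell,
\[
\|\eta^{\omega}\|_{L^2(K_{ij})}
+h\,\|\eta^{\omega}\|_{L^\infty(K_{ij})}
+h^{1/2}\,\|\eta^{\omega}\|_{L^2(\partial K_{ij})}
\le C\,h^{k+1}\,|\omega|_{H^{k+1}(K_{ij})}.
\]
Squaring and summing over $K_{ij}\in\mathcal T_h$, and using $\max_{K}|\omega|_{H^{k+1}(K)}\le|\omega|_{H^{k+1}(\mathcal T_h)}$ for the $L^\infty$ term, gives \eqref{mod_proj_est}; this is precisely the tensor-product analogue of Lemma~\ref{lem:approx_2d}, established for $\Pi^{\pm}$ in place of $\mathbb P$.

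The main obstacle I anticipate is the reference-cell step for the tensor-product one-sided operator, specifically the boundedness of $\hat\Pi^{\pm}$ together with full reproduction of $Q_k$. The tempting shortcut of writing $\Pi^{\pm}-I=(\mathcal P_x^{\pm}-I)\otimes\mathcal P_y^{\pm}+I\otimes(\mathcal P_y^{\pm}-I)$ and estimating each summand by a one-dimensional error is delicate, because $\mathcal P_y^{\pm}$ is \emph{not} $L^2$-stable (it uses an endpoint value) and so cannot simply absorb the $x$-error in the first summand; the clean fix is to keep the operator intact on $\hat K$ and rely on the $C^0$-embedding together with Bramble--Hilbert, as above. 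Everything else is routine scaling and is identical for both $\Pi^- u$ and $\Pi^+ v$.
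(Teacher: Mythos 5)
The paper states Theorem~\ref{thm_mod_proj} without proof, deferring to \cite{cockburn2001superconvergence,Ciarlet1978Fem,XuShu2007}; your argument is a correct and essentially standard reconstruction of the proof in those references — reference-element reduction, boundedness of the tensor-product one-sided projection via the embedding $H^{k+1}(\hat K)\hookrightarrow C^0(\overline{\hat K})$ for $k\ge 1$, reproduction of $P_k\subset Q_k$, Bramble--Hilbert, and scaling with the correct powers of $h$ for the volume, $L^\infty$, and trace norms. Your caveat about not splitting $\Pi^{\pm}-I$ into one-dimensional error operators (since $\mathcal P_y^{\pm}$ is not $L^2$-stable) identifies exactly the delicate point, and keeping the operator intact on the reference cell is the right fix.
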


Since the exact solution $(u,q,p,s,v,w,r)$ of the CH-KP system \eqref{eq:local-system-fixed}-\eqref{eq:local-system-fixedb} satisfies the HDG scheme \eqref{eq:hdg-local}-\eqref{boundary_data},  we have the following error equations:
\begin{subequations}\label{eq:error-equations}
\begin{align}
(\varepsilon_q,\phi_q)_{\mathcal T_h}
+(\varepsilon_u,\partial_x\phi_q)_{\mathcal T_h}
-\langle \widehat{\varepsilon}_u, n_x\phi_q\rangle_{\partial\mathcal T_h} &= 0,\label{hdg_eq}\\[2pt]
(\varepsilon_p,\phi_p)_{\mathcal T_h}
+\bigl(uq-u_hq_h,\partial_x\phi_p\bigr)_{\mathcal T_h}
-\langle u q-\widehat{u_hq_h},n_x\phi_p\rangle_{\partial\mathcal T_h} &= 0,\\[2pt]
(\varepsilon_s,\phi_s)_{\mathcal T_h}
+(\varepsilon_u,\partial_y\phi_s)_{\mathcal T_h}
-\langle \widehat{\varepsilon}_u, n_y\phi_s\rangle_{\partial\mathcal T_h} &= 0,\\[2pt]
(\varepsilon_s,\phi_v)_{\mathcal T_h}
+(\varepsilon_v,\partial_x\phi_v)_{\mathcal T_h}
-\langle \widehat{\varepsilon}_v, n_x\phi_v\rangle_{\partial\mathcal T_h} &= 0,\\[2pt]
(\varepsilon_z,\phi_z)_{\mathcal T_h}
+(\varepsilon_r,\partial_x\phi_z)_{\mathcal T_h}
-\langle \widehat{\varepsilon}_r, n_x\phi_z\rangle_{\partial\mathcal T_h} &= 0,\\[2pt]
(\varepsilon_r,\phi_r)_{\mathcal T_h}
+\bigl(\varepsilon_z-\varepsilon_pf(u)-f(u_h)+\tfrac12(q^{2}-q_h^{2}),
          \partial_x\phi_r\bigr)_{\mathcal T_h} \nonumber\\
+(\varepsilon_v,\partial_y\phi_r)_{\mathcal T_h}
-\Bigl\langle
  \widehat{\varepsilon}_z-\widehat{\varepsilon}_p+f(u)-\widehat{f(u_h)}+\tfrac12(q^{2}-\widehat{q_h^{2}}),
  n_x\phi_r\Bigr\rangle_{\partial\mathcal T_h}
-\langle \widehat{\varepsilon}_v, n_y\phi_r\rangle_{\partial\mathcal T_h} &= 0,\\[4pt]
((\varepsilon_u)_t,\phi_u)_{\mathcal T_h}
+(\varepsilon_r,\phi_u)_{\mathcal T_h} &= 0,
\end{align}
\end{subequations}
for every test function
\((\phi_u,\phi_q,\phi_p,\phi_s,\phi_v,\phi_z,\phi_r)\in\bigl[ Q_k(K_{ij}) \bigr]^7\). 
Here $\varepsilon_\omega := \omega - \omega_h$ for $\omega= u,q,p,v,z,s,r$ and the associated numerical trace errors \(
\widehat{\varepsilon}_\omega = P^\partial\omega-\widehat \omega_h\), where \(\omega = u,\,v,\,r,\,z,\,p,\,q
\), $\widehat \omega_h$ are defined by \eqref{Vertical_trace}-\eqref{Horizontal_trace}, and $P^\partial $ is the elementwise $L^2$-projection onto $P_k(F)$ such that, for all \(\omega\in L^2(F)\), there holds
\[
\int_{F}(P^\partial\omega-\omega)v\,\mathrm{d}x = 0,\qquad \forall v\in P_k(F) \text{ and } F \in \mathcal F_h.
\]
 Moreover, from \eqref{eq:weak_TC}, we also have the following transmission condition 
\begin{equation}\label{eq:weak_TCe}
\begin{aligned}
&\langle (uq- \widehat{u_hq_h})n_x,  \gamma \rangle_{\partial\mathcal T_h} = 0, \qquad \langle \widehat \varepsilon_u n_y, \theta  \rangle_{\partial\mathcal T_h} = 0,\qquad \langle \widehat \varepsilon_v n_x, 
     \lambda  \rangle_{\partial\mathcal T_h} = 0,\\&   
\langle  (\widehat \varepsilon_z-\widehat \varepsilon_p+ (f(u) -\widehat{f(u_h)}))n_x
             ,  \nu \rangle_{\partial\mathcal T_h} =0,\qquad \langle \widehat \varepsilon_vn_y, 
     \zeta  \rangle_{\partial\mathcal T_h} = 0, 
\end{aligned}
\end{equation}
for all test traces \(\gamma, \nu \in \mathcal V^k_h(0)\), \(\theta\in\widetilde M_h^k( \mathcal H\setminus \Gamma_B)\), \(\lambda\in\widetilde M_h^k( \mathcal V\setminus \Gamma_L)\), and \(\zeta \in \widetilde M_h^k( \mathcal H\setminus \Gamma_T)\). By the definition of $\widehat u_h$ and $\widehat q_h$, as well as the fact that the exact solution satisfies scheme \eqref{eq:hdg-local}-\eqref{boundary_data}, we have $\langle\widehat \varepsilon_u,\delta n_x\rangle_{\partial\mathcal T_h} = 0$  and $\langle\widehat \varepsilon_q,\sigma n_x\rangle_{\partial\mathcal T_h} = 0$, for all $\delta, \sigma \in \mathcal V^k_h(0)$.

Now we decompose the errors in the following way:
\begin{equation}\label{decomp_errors}
\begin{aligned}
\varepsilon_u  &= (\Pi^- u -u_h) -(\Pi^- u-u)=:\xi^u- \eta^u,\\ \varepsilon_v  &= (\Pi^+ v -v_h) -(\Pi^+ v-v)=:\xi^v- \eta^v, \\
\varepsilon_{\bar\omega}  &= (\mathbb P {\bar\omega} -{\bar\omega}_h) -(\mathbb P {\bar\omega}-{\bar\omega})=:\rho^{\bar\omega} - \mu^{\bar\omega}, \qquad {\bar\omega}\in\{q,p,s,z,r\}.
\end{aligned}
\end{equation}
We also introduce some notations on mesh skeleton $\partial \mathcal T_h$ as follows
\begin{equation}
\widehat{\xi}^\omega:= \widehat{\varepsilon}_\omega,  \qquad
 \omega \in \{u,v\} \qquad \text{ and }\qquad \widehat{\rho}^{\bar\omega}:= \widehat{\varepsilon}_{\bar\omega}, \qquad 
 \bar\omega \in \{q,p,z,r\}.
\label{eq:primary_flux_errors}
\end{equation}
 Then from the definitions \eqref{Vertical_trace}-\eqref{Horizontal_trace}, \eqref{eq:primary_flux_errors}, \eqref{eq:proj-vert-left}, and along with some simple algebraic manipulations, we have the following. On the left vertical faces ($\Gamma_{i,j}^L$, $n_x = -1$), we have
\begin{equation}
\begin{aligned}
\langle\widehat{\rho}^z-\widehat{\rho}^p,\nu_1n_x\rangle_{\Gamma_{i,j}^L} &= \langle\rho^z-\rho^p + \tau_{zpu}^{+} (\widehat{\xi}^u - \xi^u) n_x, \nu_1n_x\rangle_{\Gamma_{i,j}^L} - \langle\mu^z-\mu^p - \tau_{zpu}^{+}\eta^u n_x, \nu_1n_x\rangle_{\Gamma_{i,j}^L}, \\
\langle\widehat{\xi}^v,\nu_2n_x\rangle_{\Gamma_{i,j}^L} &= \langle\xi^v,\nu_2n_x\rangle_{\Gamma_{i,j}^L},\\
\langle\widehat{\rho^r} ,\nu_3n_x\rangle_{\Gamma_{i,j}^L}&=\langle-(\widehat \xi^u)_t,\nu_3n_x\rangle_{\Gamma_{i,j}^L},
\end{aligned}
\label{eq:vertical_left_flux}
\end{equation}
 on the right vertical face ($\Gamma_{i,j}^R$, $n_x = 1$), we have
\begin{equation}
\begin{aligned}
\langle\widehat{\rho}^z-\widehat{\rho}^p,\nu_1n_x\rangle_{\Gamma_{i,j}^R} &= \langle\rho^z-\rho^p + \tau_{zpu}^{-} (\widehat{\xi}^u - \xi^u) n_x+ \tau_{zpv}^{-} (\widehat{\xi}^v - \xi^v) n_x, \nu_1n_x\rangle_{\Gamma_{i,j}^R} \\&\qquad- \langle\mu^z-\mu^p - \tau_{zpu}^{-}\eta^u n_x- \tau_{zpv}^{-}\eta^v n_x, \nu_1n_x\rangle_{\Gamma_{i,j}^R},\\
\langle\widehat{\rho^r} ,\nu_3n_x\rangle_{\Gamma_{i,j}^R}&=\langle-(\widehat \xi^u)_t,\nu_3n_x\rangle_{\Gamma_{i,j}^R},
\end{aligned}
\label{eq:vertical_right_flux}
\end{equation}
for all $\nu_1,\nu_2,\nu_3 \in Q_k(K_{ij})$, and finally, on the horizontal faces $ \Gamma_{i,j}^B$ and $ \Gamma_{i,j}^T$, we have
\begin{equation}
\langle\widehat{\xi}^v, \gamma_1n_y\rangle_{\Gamma_{i,j}^B} := \langle\xi^v, \gamma_1 n_y\rangle_{\Gamma_{i,j}^B}\qquad  \langle\widehat{\xi}^u,\gamma_2n_y\rangle_{\Gamma_{i,j}^T} := \langle\xi^u,\gamma_2n_y\rangle_{\Gamma_{i,j}^T},
\label{eq:horizontal_flux}
\end{equation}
for all $\gamma_1,\gamma_2 \in Q_k(K_{ij})$, respectively.
Therefore, utilizing the decomposition \eqref{decomp_errors} and definition \eqref{eq:primary_flux_errors}, we rewrite the error equation in terms of $\xi$, $\eta$, $\rho$, and $\mu$ as follows:
 \begin{subequations}\label{eq:error-equationsxi}
\begin{align}
(\rho^q,\phi_q)_{\mathcal T_h}
+(\xi^u,\partial_x\phi_q)_{\mathcal T_h}
-\langle \widehat\xi^u, n_x\phi_q\rangle_{\partial\mathcal T_h} &= 0,\label{hdg_eqxi}\\[2pt]
(\rho^p,\phi_p)_{\mathcal T_h}
+\bigl(uq-u_hq_h,\partial_x\phi_p\bigr)_{\mathcal T_h}
-\langle u q-\widehat{u_hq_h},n_x\phi_p\rangle_{\partial\mathcal T_h} &= 0,\\[2pt]
(\rho^s,\phi_s)_{\mathcal T_h}
+(\xi^u,\partial_y\phi_s)_{\mathcal T_h}
-\langle \widehat\xi^u, n_y\phi_s\rangle_{\partial\mathcal T_h} &= 0,\\[2pt]
(\rho^s,\phi_v)_{\mathcal T_h}
+(\xi^v,\partial_x\phi_v)_{\mathcal T_h}
-\langle \widehat\xi^v, n_x\phi_v\rangle_{\partial\mathcal T_h} &= 0,\\[2pt]
(\rho^z,\phi_z)_{\mathcal T_h}
+(\rho^r,\partial_x\phi_z)_{\mathcal T_h}
-\langle \widehat\rho^r, n_x\phi_z\rangle_{\partial\mathcal T_h} &= 0,\\[2pt]
(\rho^r,\phi_r)_{\mathcal T_h}
+(\rho^z-\rho^p+f(u)-f(u_h)+\tfrac12(q^{2}-q_h^{2}),
          \partial_x\phi_r\bigr)_{\mathcal T_h} \nonumber
+(\xi^v,\partial_y\phi_r)_{\mathcal T_h}\\
-\bigl\langle
  \widehat\rho^z-\widehat\rho^p+f(u)-\widehat{f(u_h)},
  n_x\phi_r\bigr\rangle_{\partial\mathcal T_h}- \bigl\langle \tfrac12(q^{2}-\widehat{q_h^{2}}),
  n_x\phi_r\bigr\rangle_{\partial\mathcal T_h}
-\langle \widehat\xi^v, n_y\phi_r\rangle_{\partial\mathcal T_h} &= 0,\\[4pt]
((\xi^u)_t,\phi_u)_{\mathcal T_h}-((\eta^u)_t,\phi_u)_{\mathcal T_h}
+(\rho^r,\phi_u)_{\mathcal T_h} &= 0,
\end{align}
\end{subequations}
for every test function
\((\phi_u,\phi_q,\phi_p,\phi_s,\phi_v,\phi_z,\phi_r)\in\bigl[ Q_k(K_{ij}) \bigr]^7\), where projection orthogonality properties \eqref{eq:proj-moment}-\eqref{proj_ortho_mod} is incorporated.
 Differentiating \eqref{hdg_eq} with respect to time and taking $\phi_q = \rho^q$, we obtain
\begin{equation}\label{hdg_eqt}
    ((\rho^q)_t,\rho^q)_{\mathcal T_h}
+((\xi^u)_t,\partial_x\rho^q)_{\mathcal T_h}
-\langle (\widehat\xi^u)_t ,n_x\rho^q\rangle_{\partial\mathcal T_h} = 0,
\end{equation}
where we again used the definitions \eqref{decomp_errors} and \eqref{eq:primary_flux_errors}, and projection orthogonality \eqref{eq:proj-moment}.     
Now we write the compact form of the error equations \eqref{eq:error-equationsxi} by adding all the equations from system \eqref{eq:error-equationsxi} and \eqref{hdg_eqt}  as (we use notations $\boldsymbol{\xi} = (\xi^u,\rho^q,\rho^p,\rho^s,\xi^v,\rho^z,\rho^r)$ and $\boldsymbol{\eta}=(\eta^u,\mu^q,\mu^p,\mu^s,\eta^v,\mu^z,\mu^r)$)
\begin{align}\label{err_eqn_cpt_xi}
    \nonumber\mathcal{B}(\boldsymbol{\xi};\phi_u,\phi_q,\phi_p,\phi_s,\phi_v,\phi_z,\phi_r) &=  \mathcal{A}(\boldsymbol{\eta};\phi_u,\phi_q,\phi_p,\phi_s,\phi_v,\phi_z,\phi_r) +\mathcal{H}(f;u,u_h,\phi_r) \\&\qquad+ \mathcal{Q}(u,q;u_h,q_h;\phi_p,\phi_r),
\end{align}
where 
\begin{equation}\label{B_ij}
\begin{split}
  \mathcal{B}(\boldsymbol{\xi};&\phi_u,\phi_q,\phi_p,\phi_s,\phi_v,\phi_z,\phi_r)= (\rho^q,\phi_q)_{\mathcal T_h}
+(\xi^u,\partial_x\phi_q)_{\mathcal T_h}
-\langle \widehat \xi^u ,n_x\phi_q\rangle_{\partial\mathcal T_h}  
 +(\rho^p,\phi_p)_{\mathcal T_h}\\&
+(\rho^s,\phi_s)_{\mathcal T_h}
+(\xi^u,\partial_y\phi_s)_{\mathcal T_h}
-\langle \widehat\xi^u, n_y\phi_s\rangle_{\partial\mathcal T_h} +(\rho^s,\phi_v)_{\mathcal T_h}
+(\xi^v,\partial_x\phi_v)_{\mathcal T_h}
-\langle \widehat\xi^v ,n_x\phi_v\rangle_{\partial\mathcal T_h} \\&+(\rho^z,\phi_z)_{\mathcal T_h}
+(\rho^r,\partial_x\phi_z)_{\mathcal T_h}
-\langle \widehat{\rho^r} ,n_x\phi_z\rangle_{\partial\mathcal T_h} 
+(\rho^r,\phi_r)_{\mathcal T_h}
+(\rho^z,\partial_x\phi_r)_{\mathcal T_h}-(\rho^p,\partial_x\phi_r)_{\mathcal T_h} \\&+(\xi^v,\partial_y\phi_r)_{\mathcal T_h}
-\Bigl\langle
  \widehat\rho^z-\widehat\rho^p,
  n_x\phi_r\Bigr\rangle_{\partial\mathcal T_h}
-\langle \widehat\xi^v, n_y\phi_r\rangle_{\partial\mathcal T_h} +(\xi^u_t,\phi_u)_{\mathcal T_h}
+(\rho^r,\phi_u)_{\mathcal T_h}\\&+  ((\rho^q_t,\rho^q)_{\mathcal T_h}
+(\xi^u_t,\partial_x\rho^q)_{\mathcal T_h}
-\langle \widehat\xi^u_t ,n_x\rho^q\rangle_{\partial\mathcal T_h},
\end{split}
\end{equation}
\begin{equation}\label{A_ij}
\begin{split}
   \mathcal{A}(\boldsymbol{\eta};\phi_u,\phi_q,\phi_p,\phi_s,\phi_v,\phi_z,\phi_r)=((\eta^u)_t,\phi_u)_{\mathcal T_h},
\end{split}
\end{equation}
\begin{equation}\label{H_ij}
    \begin{split}
        \mathcal{H}(f;u,u_h,\phi_r)&= -(f(u)-f(u_h),\partial_x\phi_r)_{\mathcal T_h}+\langle f(u)-\widehat{f(u_h)},n_x\phi_r\rangle_{\partial\mathcal T_h},
    \end{split}
\end{equation}
and
\begin{equation}\label{Q_ij}
    \begin{aligned}
        \mathcal{Q}(u,q;u_h,q_h;\phi_p,\phi_r)&= -(uq-u_hq_h,\partial_x\phi_p)_{\mathcal T_h}+\langle uq-\widehat{u_hq_h},n_x\phi_p\rangle_{\partial\mathcal T_h} -\bigl( \tfrac12(q^{2}-q_h^{2}),
          \partial_x\phi_r\bigr)_{\mathcal T_h}\\&\qquad+\Bigl\langle\tfrac12(q^{2}-\widehat{q_h^{2}}),
  n_x\phi_r\Bigr\rangle_{\partial\mathcal T_h}.
    \end{aligned}
\end{equation}
 Since $\widehat{\xi}^\omega = \widehat{\varepsilon}_\omega$, for  $\omega \in \{u,v\}$,  and $\widehat\rho^q  = \widehat \varepsilon_q$, the system \eqref{eq:error-equationsxi} is endowed with the following boundary conditions
\begin{equation}\label{boundary_data2}
\begin{aligned}
  \widehat \xi^{u}=0~\text{ on } \partial\Omega/\Gamma_T,\quad
  \widehat \rho^q= 0 ~\text{ on }
    \Gamma_L\cup\Gamma_R ,\quad \widehat \xi^v = 0 ~\text{ on } \Gamma_R\cup\Gamma_T.
\end{aligned}
\end{equation}
 Moreover, from \eqref{eq:weak_TCe} and \eqref{eq:primary_flux_errors}, we also have the following transmission condition
\begin{equation}\label{eq:weak_TCe_xi}
\begin{aligned}
&\langle (uq-\widehat{u_hq_h})n_x,  \gamma \rangle_{\partial\mathcal T_h} = 0, \qquad \langle \widehat \xi^un_y, \theta  \rangle_{\partial\mathcal T_h} = 0,\qquad \langle \widehat \xi^v n_x, 
     \lambda  \rangle_{\partial\mathcal T_h} = 0,\\&   
\langle  (\widehat \rho^z-\widehat \rho^p+ (f(u) -\widehat{f(u_h)}))n_x,  \nu \rangle_{\partial\mathcal T_h} =0,\qquad \langle \widehat \xi^v n_y, 
     \zeta  \rangle_{\partial\mathcal T_h} = 0, 
\end{aligned}
\end{equation}
for all test traces \(\gamma, \nu \in \mathcal V^k_h(0)\), \(\theta\in\widetilde M_h^k(\mathcal H\setminus \Gamma_B)\), \(\lambda\in\widetilde M_h^k(\mathcal V\setminus \Gamma_L)\), and \(\zeta \in \widetilde M_h^k( \mathcal H\setminus \Gamma_T)\). By the definition of $\widehat u_h$ and $\widehat q_h$, we have $\langle\widehat \xi^u,\delta n_x\rangle_{\partial\mathcal T_h} = 0$  and $\langle\widehat \rho^q,\sigma n_x\rangle_{\partial\mathcal T_h} = 0$, for all $\delta, \sigma \in \mathcal V^k_h(0)$.

\begin{assumption}[An {\it a priori } assumption ]\label{Aprioriass}In order to establish the $\mathcal O(h^{k+1/2})$ order of convergence and deal with the nonlinear term, we require the following assumptions for $t< T$.
    We assume 
    \begin{equation}\label{ass1uu_h}
        \begin{aligned}
            \|u-u_h\|_{\mathcal T_h} \leq h^{2}, \qquad \|q-q_h\|_{\mathcal T_h} \leq h^{2}.
        \end{aligned}
    \end{equation}
    Consequently, we have the following
     \begin{equation}\label{ass1uu_h2}
        \begin{aligned}
            &\|u-u_h\|_{L^{\infty}(\mathcal T_h)} \leq h, \qquad \|u-u_h\|_{L^{\infty}(\partial \mathcal T_h)} \leq h^{1/2}, \\&  \|q-q_h\|_{L^{\infty}(\mathcal T_h)} \leq h, \qquad \|q-q_h\|_{L^{\infty}(\partial \mathcal T_h)} \leq h^{1/2}.
        \end{aligned}
    \end{equation}
\end{assumption}

\begin{theorem}[A-priori $L^{2}$ error bound]\label{thm:error_semi}
Assume that the CH–KP problem \eqref{eq:ibvp} admits a classical solution
\(
    u\in C\bigl([0,T];X^{k+3}(\Omega)\bigr)\cap C^{1}\bigl([0,T];X^{k+1}(\Omega)\bigr)
\) with homogeneous boundary conditions and $u_0 \in X^{k+3}(\Omega)$.
Let \((u_h,q_h)\in V_h^{k}\times V_h^{k}\) with
\(k\ge2\) be the semi-discrete HDG approximation produced by the scheme
\eqref{eq:hdg-local}-\eqref{boundary_data} on a regular Cartesian mesh \(\mathcal T_h\). Assume that stabilization parameters $\tau $s satisfy Assumption \ref{globalass}.
Then, for \(h\) sufficiently small and for every \(t\in[0,T]\),
\begin{equation}\label{eq:err_bound}
    \begin{aligned}
        \|u(t)-u_h(t)\|_{\mathcal T_h}
        +\|q(t)-q_h(t)\|_{\mathcal T_h}\le Ch^{k+1/2},
    \end{aligned}
\end{equation}
where the constant \(C>0\) depends only on the final time \(T\), the
polynomial degree \(k\), the Sobolev norm
\(
  \displaystyle
  \max_{0\le\tau\le T}\|u(\tau)\|_{H^{k+2}(\Omega)},
\) constant $\delta>0$,
and an upper bound for the first and second derivatives of the nonlinear flux
\(f(u)\).
\end{theorem}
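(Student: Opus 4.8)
The plan is to mirror the energy argument of the stability Lemma~\ref{lem:stability} at the level of the projected error and then absorb the three contributions $\mathcal A$, $\mathcal H$, $\mathcal Q$ on the right of \eqref{err_eqn_cpt_xi} into a Grönwall inequality. Concretely, in the compact error identity \eqref{err_eqn_cpt_xi} I would insert the test functions
\[
  (\phi_u,\phi_q,\phi_p,\phi_s,\phi_v,\phi_z,\phi_r)
  = (\xi^u-\partial_x\rho^q,\ \rho^p-\rho^z,\ -\rho^q,\ -\xi^v,\ \xi^v,\ \rho^q,\ -\xi^u),
\]
which are exactly the error analogues of the choice made in Lemma~\ref{lem:stability}. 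Repeating the same integration by parts, the transmission conditions \eqref{eq:weak_TCe_xi}, the homogeneous boundary data \eqref{boundary_data2}, and the face relations \eqref{eq:vertical_left_flux}--\eqref{eq:horizontal_flux}, the bilinear form collapses to
\[
  \mathcal B(\boldsymbol\xi;\cdots)
  = \frac{\mathrm d}{\mathrm dt}\,\mathcal E_h^{\xi}(t) + \Theta(t),
  \qquad
  \mathcal E_h^{\xi}(t):=\tfrac12\bigl(\|\xi^u\|_{\mathcal T_h}^2+\|\rho^q\|_{\mathcal T_h}^2\bigr),
\]
where $\Theta(t)\ge 0$ collects the stabilization/jump contributions governed by the signs in Assumption~\ref{globalass} (the terms in $\tau_{zpu}^{\pm}$, $\tau_{zpv}^{-}$, $\tau_{uqq}$, together with the $\tfrac12\|\widehat\xi^v-\xi^v\|_R^2$ boundary term). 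This identity is the error counterpart of \eqref{energy}.

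The linear projection term is easy: with $\phi_u=\xi^u-\partial_x\rho^q$ one has $\mathcal A=((\eta^u)_t,\xi^u-\partial_x\rho^q)_{\mathcal T_h}$, and since $(\eta^u)_t=\eta^{u_t}$ is orthogonal to $P_{k-1}(I_i)\otimes P_k(J_j)\ni\partial_x\rho^q$ by \eqref{proj_ortho_mod}, the second piece vanishes, leaving $\mathcal A=(\eta^{u_t},\xi^u)_{\mathcal T_h}\le Ch^{k+1}\|\xi^u\|_{\mathcal T_h}$ via Theorem~\ref{thm_mod_proj} applied to $u_t$; Young's inequality bounds this by $Ch^{2k+2}+C\mathcal E_h^\xi$.

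The crux is the nonlinear block $\mathcal H+\mathcal Q$ at $\phi_r=-\xi^u$, $\phi_p=-\rho^q$. I would write $f(u)-f(u_h)=\bigl(2\kappa+\tfrac32(u+u_h)\bigr)(u-u_h)$ together with $uq-u_hq_h=u(q-q_h)+q_h(u-u_h)$ and $q^2-q_h^2=(q+q_h)(q-q_h)$, then substitute the splitting $u-u_h=\xi^u-\eta^u$, $q-q_h=\rho^q-\mu^q$ from \eqref{decomp_errors}. The volume contributions are handled by Cauchy--Schwarz with the a priori $L^\infty$ bounds \eqref{ass1uu_h2} (making one factor $O(h)$-small) and the inverse estimates of Lemma~\ref{lem:inverse_2d} to absorb the $\partial_x\rho^q$, $\partial_x\xi^u$ derivatives, yielding $C\mathcal E_h^\xi+Ch^{2k+2}$. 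The delicate part is the boundary integrals arising from the flux definitions \eqref{Vertical_trace}: the design of $\widehat{u_hq_h}=u_h\tfrac{\widehat q_h+q_h}{2}+\tau_{uqq}(\widehat q_h-q_h)n_x$, $\widehat{q_h^2}=\widehat q_h^{\,2}$, and $\widehat{f(u_h)}=f(u_h)-\tau_f(\widehat u_h-u_h)n_x$ is precisely what makes the leading quadratic face contributions recombine, through the definition \eqref{eq:tildetau} of $\tilde\tau$ and Assumption~\ref{globalass}, into sign-definite jump terms that are moved into $\Theta$. Here the half order is lost: every surviving face term is organized as a projection error tested against a jump, e.g.\ $\langle \mu^q,(\widehat\xi^u-\xi^u)n_x\rangle_{\partial\mathcal T_h}$, and Young's inequality gives $\tfrac14\|\mu^q\|_{\partial\mathcal T_h}^2+\epsilon\,\Theta\le Ch^{2k+1}+\epsilon\Theta$ by Lemma~\ref{lem:approx_2d} and Theorem~\ref{thm_mod_proj} (the boundary projection error is only $O(h^{k+1/2})$), the $\epsilon\Theta$ being absorbed on the left. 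Keeping jumps paired with projection errors, rather than invoking the trace inverse inequality on $\xi^u$, is what prevents losing a full order, and this bookkeeping is the main obstacle.

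Collecting the estimates yields $\tfrac{\mathrm d}{\mathrm dt}\mathcal E_h^\xi(t)\le C\,\mathcal E_h^\xi(t)+Ch^{2k+1}$; together with the initial bound $\mathcal E_h^\xi(0)\le Ch^{2k+1}$ (from the approximation properties of the projected initial data) Grönwall's inequality gives $\mathcal E_h^\xi(t)\le Ch^{2k+1}$ on $[0,T]$, that is $\|\xi^u\|_{\mathcal T_h}+\|\rho^q\|_{\mathcal T_h}\le Ch^{k+1/2}$. The triangle inequality with Theorem~\ref{thm_mod_proj} and Lemma~\ref{lem:approx_2d} then yields $\|u-u_h\|_{\mathcal T_h}+\|q-q_h\|_{\mathcal T_h}\le Ch^{k+1/2}$. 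Finally, since $k\ge2$ forces $h^{k+1/2}\le h^{5/2}\le h^2$ for $h$ small, the derived bound is strictly stronger than the a priori Assumption~\ref{Aprioriass}, so a standard continuation/bootstrap argument closes the circular dependence and validates \eqref{ass1uu_h} a posteriori.
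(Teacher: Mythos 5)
Your proposal follows essentially the same route as the paper's proof: the identical test-function choice $(\xi^u-\partial_x\rho^q,\rho^p-\rho^z,-\rho^q,-\xi^v,\xi^v,\rho^q,-\xi^u)$ in the compact error identity, the same collapse of $\mathcal B$ into an energy derivative plus sign-definite stabilization terms under Assumption~\ref{globalass}, the same treatment of $\mathcal A$, $\mathcal H$, $\mathcal Q$ (with the half order lost exactly where you say, in face terms pairing boundary projection errors with jumps), and the same Gr\"onwall plus continuation/bootstrap closure of Assumption~\ref{Aprioriass}. The only difference is one of packaging: the paper isolates the nonlinear estimates for $\mathcal Q$ and $\mathcal H$ into separate appendix lemmas and uses a Taylor expansion of $f$ about $u$ rather than your factored form (equivalent for the quadratic flux), but the substance is the same.
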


\begin{proof}
We choose  $(\phi_u,\phi_q,\phi_p,\phi_s,\phi_v,\phi_z,\phi_r) = (\xi^u-\partial_x\rho^q,\rho^p-\rho^z,-\rho^q,-\xi^v,\xi^v,\rho^q,-\xi^u)$ as a test function in \eqref{err_eqn_cpt_xi}.
This gives
\begin{align}\label{err_eqn_cpt_test}
    \nonumber\mathcal{B}(\boldsymbol{\xi};\xi^u-\partial_x\rho^q,\rho^p-\rho^z,-\rho^q,-\xi^v,\xi^v,\rho^q,-\xi^u) &=  \mathcal{A}(\boldsymbol{\eta};\xi^u-\partial_x\rho^q,\rho^p-\rho^z,-\rho^q,-\xi^v,\xi^v,\rho^q,-\xi^u) \\&\quad+\mathcal{H}(f;u,u_h,-\xi^u) + \mathcal{Q}(u,q;u_h,q_h;-\rho^q,-\xi^u).
\end{align}
We first evaluate the left-hand side of
\eqref{err_eqn_cpt_test} with tailor-made test functions and perform some algebraic manipulations.
 \begin{equation*}
        \begin{split}
   &     \mathcal{B}(\boldsymbol{\xi};\xi^u-\partial_x\rho^q,\rho^p-\rho^z,-\rho^q,-\xi^v,\xi^v,\rho^q,-\xi^u) \\& =     \big(\xi^u_t,\xi^u\big)_{\mathcal T_h}+\big(\rho^q_t,\rho^q\big)_{\mathcal T_h}
      -\langle\widehat \xi^u_t ,n_x\rho^q\rangle_{\partial\mathcal T_h} 
  +(\xi^u,\partial_x\rho^p)_{\mathcal T_h}-\langle \widehat \xi^u ,n_x\rho^p\rangle_{\partial \mathcal T_h}- (\xi^u,\partial_x\rho^z)_{\mathcal T_h} \\
  &\quad+\langle \widehat \xi^u,n_x\rho^z\rangle_{\partial \mathcal T_h} 
  - (\xi^u,\partial_y\xi^v)_{\mathcal T_h}+\langle \widehat \xi^u,n_y\xi^v\rangle_{\partial \mathcal T_h}
  + (\xi^v,\partial_x \xi^v)_{\mathcal T_h}
  -\langle \widehat \xi^v,n_x\xi^v\rangle_{\partial \mathcal T_h} - (\xi^v,\partial_y\xi^u)_{\mathcal T_h}
  \\
  &\quad+\langle \widehat \xi^v,n_y\xi^u\rangle_{\partial \mathcal T_h}
  -\langle \widehat{\rho^r},n_x\rho^q\rangle_{\partial \mathcal T_h} 
  - (\rho^z,\partial_xu_h)_{\mathcal T_h}
  +\bigl(\rho^p,\partial_x\xi^u\bigr)_{ \mathcal T_h}
  +\langle \widehat \rho^z
             -\widehat \rho^p,n_x\xi^u
             \rangle_{\partial \mathcal T_h}.
\end{split}
\end{equation*}
Next, we use multiple integration by parts to obtain the following identity:
\begin{equation*}
\begin{split}
&     \mathcal{B}(\boldsymbol{\xi};\xi^u-\partial_x\rho^q,\rho^p-\rho^z,-\rho^q,-\xi^v,\xi^v,\rho^q,-\xi^u)\\& =     \big(\xi^u_t,\xi^u\big)_{\mathcal T_h}+\big(\rho^q_t,\rho^q\big)_{\mathcal T_h}
      -\langle\widehat \xi^u_t n_x,\rho^q\rangle_{\partial\mathcal T_h} 
  +\langle\xi^u,n_x\rho^p\rangle_{\partial\mathcal T_h}-\langle \widehat \xi^u n_x,\rho^p\rangle_{\partial \mathcal T_h}- \langle\xi^u,n_x\rho^z\rangle_{\partial\mathcal T_h} \\
  &\quad+\langle \widehat \xi^un_x,\rho^z\rangle_{\partial \mathcal T_h} 
  - \langle \xi^u,n_y\xi^v\rangle_{\partial\mathcal T_h}+\langle \widehat \xi^un_y,\xi^v\rangle_{\partial \mathcal T_h}
  + \frac{1}{2}\langle\xi^v,n_x \xi^v\rangle_{\partial\mathcal T_h}
  -\langle \widehat \xi^vn_x,\xi^v\rangle_{\partial \mathcal T_h}
  \\
  &\quad+\langle \widehat \xi^vn_y,\xi^u\rangle_{\partial \mathcal T_h}
  -\langle \widehat{\rho^r}n_x,\rho^q\rangle_{\partial \mathcal T_h} 
  +\langle \widehat \rho^z
             -\widehat \rho^p,n_x\xi^u
             \rangle_{\partial \mathcal T_h}.
\end{split}
\end{equation*}
The transmission condition
\eqref{eq:weak_TCe_xi} and boundary condition \eqref{boundary_data2} yield the following identity:
\begin{equation}\label{B_ij_est}
\begin{split}
&     \mathcal{B}(\boldsymbol{\xi};\xi^u-\partial_x\rho^q,\rho^p-\rho^z,-\rho^q,-\xi^v,\xi^v,\rho^q,-\xi^u)\\& =     \big(\xi^u_t,\xi^u\big)_{\mathcal T_h}+\big(\rho^q_t,\rho^q\big)_{\mathcal T_h}
 -\langle \widehat \xi^u -\xi^u,n_x((\widehat \rho^z -\widehat \rho^p)-(\rho^z-\rho^p))\rangle_{\partial \mathcal T_h} -\langle \widehat \xi^u-\xi^u,n_y(\widehat \xi^v-\xi^v)\rangle_{\partial \mathcal T_h} \\
  &\quad
  + \Bigl\langle \frac{1}{2} (\widehat \xi^v-\xi^v)^2, n_x\Bigl\rangle_{\partial \mathcal T_h} +\frac{1}{2}(\widehat \xi^v) ^2|_{\Gamma_L} -  \langle (f(u)-\widehat{f(u_h)}),n_x\widehat \xi^u
             \rangle_{\partial \mathcal T_h}. 
        \end{split}
    \end{equation}
Eventually, the numerical-flux errors \eqref{eq:vertical_left_flux}–\eqref{eq:horizontal_flux} yield 
    \begin{equation}\label{B_ij_est3}
\begin{split}
&    \mathcal{B}(\boldsymbol{\xi};\xi^u-\partial_x\rho^q,\rho^p-\rho^z,-\rho^q,-\xi^v,\xi^v,\rho^q,-\xi^u)\\
  & = \big(\xi^u_t,\xi^u\big)_{\mathcal T_h}+\big(\rho^q_t,\rho^q\big)_{\mathcal T_h}
 -\langle \tau_{zpu}^+, (\widehat \xi^u - \xi^u)^2\rangle_{L}
 + \langle\mu^z-\mu^p - \tau_{zpu}^{+}\eta^u n_x, (\widehat \xi^u - \xi^u)n_x\rangle_{L} \\& \quad
 -\langle \tau_{zpu}^-, (\widehat \xi^u - \xi^u)^2\rangle_{R}-\langle \tau_{zpv}^-, (\widehat \xi^u - \xi^u)((\widehat \xi^v)^- - \xi^v)\rangle_{R}\\&\quad + \langle\mu^z-\mu^p - \tau_{zpu}^{-}\eta^u n_x- \tau_{zpv}^{-}\eta^v n_x, (\widehat \xi^u - \xi^u)n_x\rangle_{R}\\&\quad+ \Bigl\langle \frac{1}{2}, ((\widehat \xi^v)^--\xi^v)^2\Bigl\rangle_{R} 
   +\frac{1}{2}(\widehat \xi^v) ^2|_{\Gamma_L} -  \langle (f(u)-\widehat{f(u_h)}),n_x\widehat \xi^u
             \rangle_{\partial \mathcal T_h}. 
        \end{split}
    \end{equation}
    Finally, employing the Assumption \ref{globalass} and ignoring the positive term $\frac{1}{2}(\widehat \xi^v) ^2|_{\Gamma_L}$ to get the first inequality, and then utilizing the Young's inequality and projection estimates \eqref{eq:proj-est}-\eqref{mod_proj_est} along with the smoothness assumption on exact variables $u,p,v,z$ to obtain the second inequality as follows 
 \begin{equation}\label{B_ij_est4}
\begin{aligned}
& \mathcal{B}(\boldsymbol{\xi};\xi^u-\partial_x\rho^q,\rho^p-\rho^z,-\rho^q,-\xi^v,\xi^v,\rho^q,-\xi^u) \\&\geq  \frac{1}{2}\frac{d}{dt}\|\xi^u\|_{\mathcal T_h}^2 +\frac{1}{2}\frac{d}{dt}\|\rho^q\|_{\mathcal T_h}^2-  \langle (f(u)-\widehat{f(u_h)}),n_x\widehat \xi^u
             \rangle_{\partial \mathcal T_h}  + \langle\mu^z-\mu^p - \tau_{zpu}^{+}\eta^u n_x, (\widehat \xi^u - \xi^u)n_x\rangle_{L} \\&\quad+ \langle\mu^z-\mu^p - \tau_{zpu}^{-}\eta^u n_x- \tau_{zpv}^{-}\eta^v n_x, (\widehat \xi^u - \xi^u)n_x\rangle_{R}
              \\&\geq  \frac{1}{2}\frac{d}{dt}\|\xi^u\|_{\mathcal T_h}^2 +\frac{1}{2}\frac{d}{dt}\|\rho^q\|_{\mathcal T_h}^2 -  \langle (f(u)-\widehat{f(u_h)}),n_x\widehat \xi^u
             \rangle_{\partial \mathcal T_h}   - C(\delta)h^{2k+1}- C\delta\|\widehat \xi^u - \xi^u\|^2_{\mathcal V}, 
        \end{aligned}
    \end{equation} 
    where generic constant $C>0$ and $\delta >0$ is small, comes from the application of Young's inequality.
Next estimate $\mathcal A$, by incorporating the Young's inequality, $L^{2}$-projection property \eqref{mod_proj_est} along with the fact $u\in$ $ C^{1}\bigl([0,T];X^{k+1}(\Omega)\bigr) $, we have 
\begin{equation}\label{A_ij_est}
\begin{aligned}
     \mathcal{A}(\boldsymbol{\eta};\xi^u-\partial_x\rho^q,\rho^p-\rho^z,-\rho^q,-\xi^v,\xi^v,\rho^q,-\xi^u)  
\leq  C\|\xi^u\|_{\mathcal T_h}^2 + Ch^{2k+2}.
\end{aligned}
\end{equation}
Substituting all the estimates for $\mathcal{B}$, $\mathcal{A}$, $\mathcal{Q}$, and $\mathcal{H}$ from \eqref{B_ij_est4}, \eqref{A_ij_est}, \eqref{Q_ij_est}, and \eqref{H_ij_est} respectively in \eqref{err_eqn_cpt_test} along with the assumption \eqref{ass3} and small enough $\delta$ such that $\bar C -C\delta\geq0$, we end up with
\begin{equation}\label{complete_term}
    \begin{aligned}
        \frac{1}{2}\frac{d}{dt}\bigl(\|\xi^u\|^2_{\mathcal T_h}+\|\rho^q\|^2_{\mathcal T_h}\bigr) + (\bar C-C\delta)\|\widehat \xi^u -\xi^u\|_{ \mathcal V}^2  \leq   Ch^{2k+1} + C(\|\xi^u\|^2_{\mathcal T_h}+\|\rho^q\|^2_{\mathcal T_h}), 
    \end{aligned}
\end{equation}
where the constant \(C>0\) depends only on the final time \(T\), the
polynomial degree \(k\), the Sobolev norm
\(
  \displaystyle
  \max_{0\le\tau\le T}\|u(\tau)\|_{H^{k+2}(\Omega)},
\)
constant $\delta$, and an upper bound for the first and second derivatives of the nonlinear flux
\(f(u)\). Discarding the positive terms from the left hand side and applying the Gr\"onwall’s
inequality gives
\begin{equation}\label{err_est_xi_rho}
    \|\xi^u\|^2_{\mathcal T_h}+\|\rho^q\|^2_{\mathcal T_h} \leq Ch^{2k+1}.
\end{equation}
Combining \eqref{err_est_xi_rho} and the projection estimates \eqref{eq:proj-est} and \eqref{mod_proj_est}, we finally obtain
\begin{equation}\label{eq:err_bound2}
    \begin{aligned}
         \|u(t)-u_h(t)\|_{\mathcal T_h}+\|q(t)-q_h(t)\|_{\mathcal T_h}\leq Ch^{k+1/2},
    \end{aligned}
\end{equation}
 for all $t\in[0,T]$.
This completes the proof.
\end{proof}

The justification of assumption \eqref{ass1uu_h} and \eqref{ass1uu_h2} follows via a continuity argument \cite{XuShu2007}. For $k \geq 2$ and $h$ sufficiently small, $Ch^{k+1/2} < \frac{1}{2}h^2$, where  $C$ is constant in \eqref{eq:err_bound} can be exactly determined by using the final time $T$. Define 
\(
t^* := \sup \left\{ t : \|u(t) - u_h(t)\|_{\mathcal{T}_h} \leq h^{2}\right\}.
\)
Initial conditions satisfy $t^* > 0$ by projection properties. Assume $t^* < T$. Continuity implies 
\(
\|u(t^*) - u_h(t^*)\|_{\mathcal{T}_h} = h^{2}.
\)
Theorem \ref{thm:error_semi} (valid for $t \leq t^*$ under the assumption) yields
\[
\|u(t^*) - u_h(t^*)\|_{\mathcal{T}_h} \leq Ch^{k+1/2} < \frac{1}{2}h^{2}.
\]
This contradicts the equality if $t^*<T$, and hence $t^* \geq T$. The estimate  $\|q-q_h\|_{\mathcal T_h} \leq h^{2}$ follows similarly. The consequences in \eqref{ass1uu_h2} follow from inverse and interpolation inequalities \eqref{invesre}, and \eqref{ass1uu_h}. 

\section{Numerical Experiments}\label{sec:numerics}

This section presents a numerical experiments to validate the accuracy and demonstrate the robustness of the proposed HDG method for the CH-KP equation \eqref{eq:ibvp}. All simulations are performed on Cartesian meshes using tensor-product polynomial spaces $Q_k(K_{ij})$ with polynomial degrees $k = 1, 2, 3$ (linear, quadratic, and cubic). The semi-discrete HDG scheme is advanced in time by the implicit Euler method with a fixed time step $\Delta t = 0.001$, chosen sufficiently small so that temporal errors are negligible compared to the spatial discretization errors. 

For each test case, boundary conditions and right-hand side terms for the nonlocal operator $\partial_x^{-1} u_y$ are derived from the corresponding exact solution. We note that the method remains stable and accurate even on relatively coarse grids, i.e., for a small number of elements $N_x \times N_y$. In the convergence studies, we consider a sequence of uniformly refined meshes with $N_x = N_y = 2, 4, 8, \dots, 64$ and define the mesh size as $h = 1/N_x$ for domain $\Omega$. 
The stabilization parameters appearing in the numerical fluxes \eqref{Vertical_trace}--\eqref{Horizontal_trace} are selected in accordance with Assumption \ref{globalass}. In particular, for all the experiments reported below we set
\[
 \tau^+_{zpu} = \tau^-_{zpu}= -1, \qquad \tau^-_{zpv} = 1, \qquad \tau_{uqq} = -\frac{1}{4}, \qquad \text{ and } \quad \tau_f(\hat{u}_h, u_h) = 4.
\]
These choices satisfy the conditions of Assumption \ref{globalass} and were observed to yield optimal convergence rates and stable computations. The implementation of the HDG scheme, including the assembly of the global system and the local solvers, is available in the open-source repositories \cite{RuppGK22,HyperHDGgithub}.

\subsection{Smooth solution: Accuracy test}\label{SEC:smooth_test}
Let the manufactured solution be chosen as
\[
 u(x,y,t) = e^{-t} \sin (x) \sin (y) \qquad \text{ for } t \in [0,1] \text{ and } x,y \in (0, 2\pi).
\]
To ensure consistency, an artificial source term is computed analytically and inserted into the CH-KP equation \eqref{eq:ibvp} with parameter $\kappa = -1/2$. 
We evaluate the \( L^2 \) errors of \( u_h \) and \( q_h \), approximating exact solution $u$ and $u_x$ respectively, at $T=1$ and display them in Figure \ref{FIG:error_plots}. All plots indicate optimal orders of convergence \( \mathcal O(h^{k+1}) \) in both unknowns.
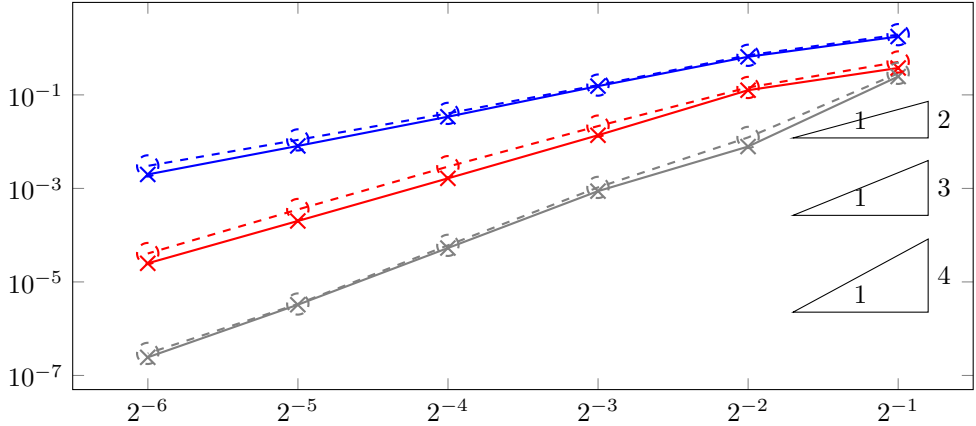
\begin{figure}[!ht]
\begin{tikzpicture}
 \begin{loglogaxis}[width=.9\textwidth, height=.45\textwidth, log basis x={2}]
  \addplot[thick,color=blue, mark size=4,mark=x] table {
   0.5       1.77
   0.25      6.53e-1
   0.125     1.54e-1
   0.0625    3.41e-2
   0.03125   8.09e-3
   0.015625  1.99e-3
  };
  \addplot[thick,color=red, mark size=4,mark=x] table {
   0.5       3.76e-1
   0.25      1.26e-1
   0.125     1.37e-2
   0.0625    1.64e-3
   0.03125   2.01e-4
   0.015625  2.5e-05
  };
  \addplot[thick,color=gray, mark size=4,mark=x] table {
   0.5       2.47e-01
   0.25      7.85e-3
   0.125     8.74e-4
   0.0625    5.31e-5
   0.03125   3.25e-6 
  0.015625   2.44e-7
  };

  \addplot[thick,color=blue, mark size=4, mark=o, dashed] table {
   0.5       1.94
   0.25      7.03e-1
   0.125     1.62e-1
   0.0625    4.02e-2
   0.03125   1.09e-2
   0.015625  3.03e-3
  };
  \addplot[thick,color=red, mark size=4, mark=o, dashed] table {
   0.5       5.06e-1
   0.25      1.44e-1
   0.125     2.15e-2
   0.0625    2.92e-3
   0.03125   3.54e-4
   0.015625  4.05e-05
  };
  \addplot[thick,color=gray, mark size=4, mark=o, dashed] table {
   0.5       2.89e-1
   0.25      1.23e-2
   0.125     1.05e-3
   0.0625    6.06e-5
   0.03125   3.37e-6
   0.015625  2.96e-7
  };

  \logLogSlopeTriangle{0.95}{0.15}{0.65}{2}{black};
  \logLogSlopeTriangle{0.95}{0.15}{0.45}{3}{black};
  \logLogSlopeTriangle{0.95}{0.15}{0.2}{4}{black};
 \end{loglogaxis}
\end{tikzpicture}
\caption{Error plots for \(u\) (solid lines) and \(q\) (dashed lines) from the accuracy test described in Section~\ref{SEC:smooth_test}. The abscissa shows the mesh size, while the ordinate displays the \(L^2\) error.  Blue curves correspond to linear shape functions, red curves to quadratic shape functions, and gray curves to cubic shape functions.}\label{FIG:error_plots}
\end{figure}

\begin{figure}[ht]
    \centering
    \begin{subfigure}[b]{0.49\textwidth}
        \includegraphics{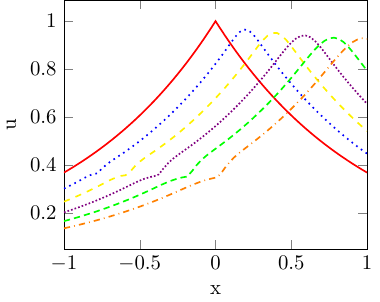}
        \caption{Cross-section $x=0$}
        \label{fig:figure1_1d01}
    \end{subfigure}
    \hfill
    \begin{subfigure}[b]{0.49\textwidth}
        \includegraphics{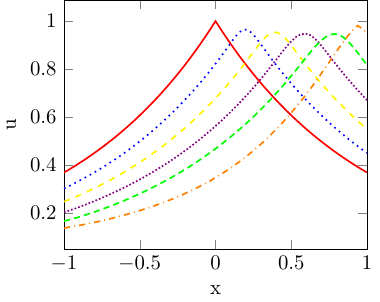}
        \caption{Cross-section $y=0$}
        \label{fig:figure2_1d01}
    \end{subfigure}
    \caption{Cross-sectional profiles of the peakon solution using $\Delta t = 0.01$. From left to right, each curve corresponds to time $t = 0, 0.2, 0.4, 0.6, 0.8, 1.0$. Minor oscillations appear near the peak due to the larger time step, but the overall shape and location remain well captured.}
    \label{fig:both_figures_1d01}
\end{figure}

\begin{figure}[ht]
    \centering
    \begin{subfigure}[b]{0.49\textwidth}
        \includegraphics{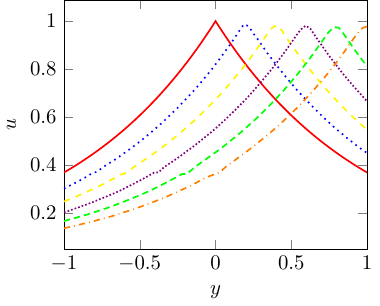}
        \caption{Cross-section $x=0$}
        \label{fig:figure1_1d}
    \end{subfigure}
    \hfill
    \begin{subfigure}[b]{0.49\textwidth}
        \includegraphics{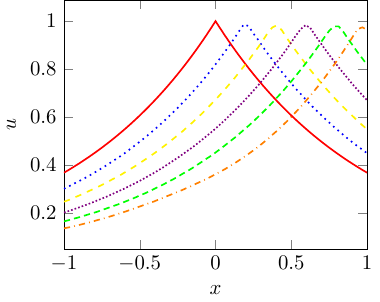}
        \caption{Cross-section $y=0$}
        \label{fig:figure2_1d}
    \end{subfigure}
    \caption{Cross-sectional profiles of the peakon solution using $\Delta t = 0.001$. From left to right, each curve corresponds to time $t = 0, 0.2, 0.4, 0.6, 0.8, 1.0$. The smaller time step effectively suppresses oscillations and yields a sharper, more accurate representation of the peak.}
    \label{fig:both_figures_1d}
\end{figure}

\begin{figure}[ht]
    \centering
    \begin{subfigure}[b]{0.49\textwidth}
        \includegraphics{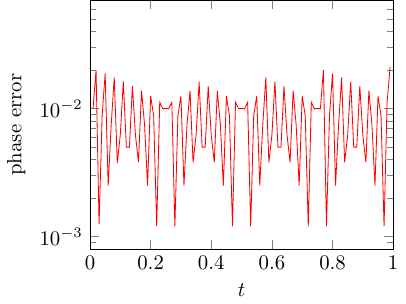}
        \caption{Cross-section $x=0$}
    \end{subfigure}
    \hfill
    \begin{subfigure}[b]{0.49\textwidth}
        \includegraphics{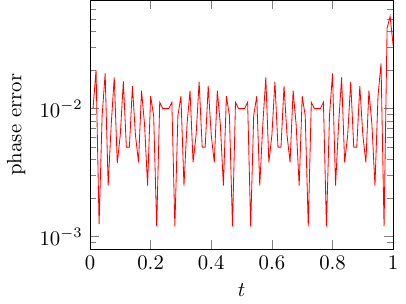}
        \caption{Cross-section $y=0$}
    \end{subfigure}
    \caption{Phase error for the peakon solution. The left and right panels show the error in the cross-sectional profiles along $x=0$ and $y=0$, respectively. The HDG method with $Q_2$ elements on a $64\times64$ grid and $\Delta t=0.001$ is used.}
    \label{fig:both_figures_phaseerror}
\end{figure}

\begin{figure}[ht]
    \centering
    \begin{subfigure}[b]{0.49\textwidth}
        \centering
        \includegraphics[width=\linewidth]{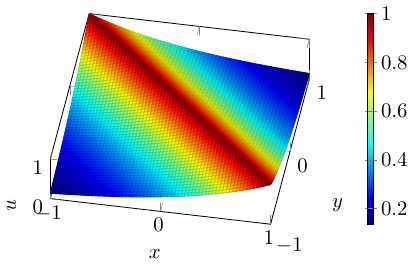}
        \vspace{2pt}
        \subcaption{t=0}
        \label{fig:peakon_t0}
    \end{subfigure}
    \hfill
    \begin{subfigure}[b]{0.49\textwidth}
        \centering
        \includegraphics[width=\linewidth]{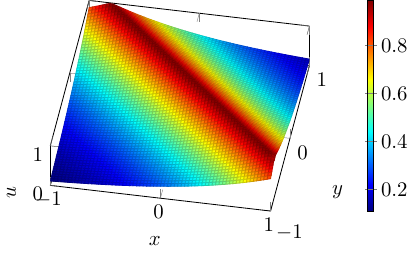}
        \vspace{2pt}
        \subcaption{t=0.2}
        \label{fig:peakon_t02}
    \end{subfigure}
    
    \vspace{0.5cm}
    
    \begin{subfigure}[b]{0.49\textwidth}
        \centering
        \includegraphics[width=\linewidth]{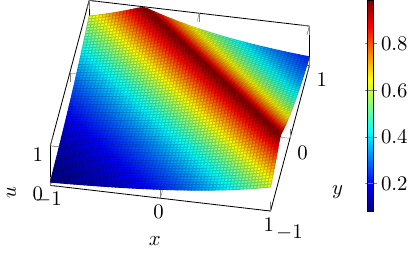}
        \vspace{2pt}
        \subcaption{t=0.5}
        \label{fig:peakon_t05}
    \end{subfigure}
    \hfill
    \begin{subfigure}[b]{0.49\textwidth}
        \centering
        \includegraphics[width=\linewidth]{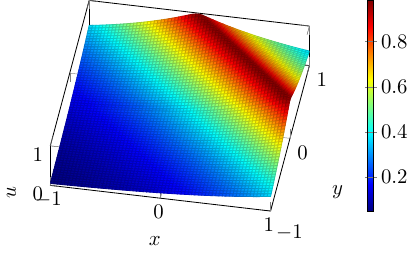}
        \vspace{2pt}
        \subcaption{t=1}
        \label{fig:peakon_t10}
    \end{subfigure}
    \caption{Three-dimensional evolution of the peakon solution over time, computed with $\Delta t = 0.001$.}
    \label{fig:peakon_evolution_grid}
\end{figure}

\subsection{Peakon solution}
Next, we solve the peakon solution \cite{ZhangXuLiu2023}
\[
 u(x,y,t) = c\exp(-|x + y - ct|) \qquad \text{ for } t \in [0,4] \text{ and } x,y \in (0,1)
\]
with parameter $\kappa = -1/2$ and $c=1$ on $[-1,1] \times [-1,1]$ under Dirichlet boundary conditions. The HDG method resolves the propagating peak with minimal dispersion using $Q_2$ elements on $64\times 64$ grids. Cross-sectional profiles confirm accurate peak localization. 

The peakon solution is a non-smooth, peaked solitary wave that arises in the Camassa--Holm family of equations \cite{CamassaHolm1993,Pelinovsky2024,GuiLiuLuoYin2021,HoldenRaynaud2006}. Physically, it models finite-time wave breaking in shallow water waves and retains its shape while propagating. 

Figure \ref{fig:both_figures_1d01} displays cross-sectional profiles of the peakon solution computed with a time step $\Delta t = 0.01$. The left panel shows the cross-section along $x=0$, while the right panel shows $y=0$. Although the overall shape and propagation speed are well represented, small oscillations are visible near the peak due to the relatively coarse temporal discretization.
Reducing the time step to $\Delta t = 0.001$ (Figure \ref{fig:both_figures_1d}) significantly improves the solution quality. The oscillations are effectively suppressed, and the peak profile becomes sharper and more accurate. This demonstrates the temporal convergence of the scheme and its ability to resolve steep gradients with appropriate time-step refinement. The phase error of the peakon solution, depicted in Figure \ref{fig:both_figures_phaseerror}, remains minimal, demonstrating the method's capability to accurately capture the wave's propagation without significant distortion.
The three-dimensional evolution of the peakon is illustrated in Figure \ref{fig:peakon_evolution_grid} at four time instances ($t = 0, 0.2, 0.5, 1$). The solution propagates without noticeable distortion or dispersion, confirming that the HDG method preserves the shape and velocity of the peakon over time.

The proposed HDG method \eqref{eq:hdg-local}-\eqref{boundary_data} demonstrates optimal accuracy for smooth solutions of the CH--KP equation. Even for challenging peakon solutions, the scheme resolves the peaks accurately with minimal numerical dissipation and phase errors that caused by the temporal discretization. The method is computationally efficient due to its hybridized structure, which reduces globally coupled degrees of freedom, and it achieves optimal results even on relatively coarse Cartesian grids. These attributes make the HDG method a powerful tool for simulating nonlinear dispersive wave equations in two dimensions.

\section{Conclusion}\label{sec:conclusion}
This paper has introduced a novel high-order hybridizable discontinuous Galerkin (HDG) method for solving the two-dimensional Camassa-Holm-Kadomtsev-Petviashvili (CH-KP) equation by leveraging Cartesian meshes with tensor-product polynomial spaces \(Q_k(K_{ij}) = P_k(I_i) \otimes P_k(J_j)\). The semi-discrete scheme maintains energy stability (\(\frac{d}{dt}\mathcal{E}_h(t) \leq 0\)).  
 Error analysis establishes \(\mathcal{O}(h^{k+1/2})\) convergence in the energy norm for smooth solutions achieved via a special projection operator.  Hybridization reduces globally coupled degrees of freedom in local DG methods, enabling scalable simulations.  Numerical experiments validate the method's capability to accurately capture smooth solutions and peakon solutions.

Future work will extend this direct {\it non-integration} HDG framework to {\it integration} HDG approach by integrating the equation $v_x =u_y$ and setting  
\[
v_h(x,y)\big|_{K_{ij}} = \widehat{v}_h(x_R,y) - \int^{x_R}_{x} s_h(\xi,y)d\xi, \quad s_h = \partial_y u_h,
\]
and develop the Hamiltonian conservative scheme. 

The HDG framework presented here is readily extensible to other nonlocal, quasilinear dispersive wave equations—including the Camassa–Holm, Degasperis–Procesi, 
Novikov, and various Kadomtsev–Petviashvili-type systems—as well as to related geophysical fluid models, offering a versatile and high‑order accurate computational  tool for a wide range of applications.

\bibliographystyle{abbrv}
\bibliography{Main}
\appendix
\section{Appendix: Proof of the technical lemmas}\label{sec:proofs}

\begin{lemma}\label{Q_lemma}
    Let $\mathcal{Q}$ be defined by \eqref{Q_ij}. Assume that all the hypotheses of Theorem \ref{thm:error_semi} hold. Then we have the following estimate
   \begin{equation}\label{Q_ij_est}
\begin{split}
    \mathcal{Q}(u,q;u_h,q_h;-\rho^q,-\xi^u)\leq Ch^{2k+1} + C (\|\rho^q\|^2_{\mathcal T_h} + \|\xi^u\|_{\mathcal T_h}^2).
     \end{split}
\end{equation} 
\end{lemma}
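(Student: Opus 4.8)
The plan is to substitute $\phi_p=-\rho^q$ and $\phi_r=-\xi^u$ into the definition \eqref{Q_ij}, obtaining
\begin{align*}
\mathcal Q(u,q;u_h,q_h;-\rho^q,-\xi^u)
&=(uq-u_hq_h,\partial_x\rho^q)_{\mathcal T_h}
 -\langle uq-\widehat{u_hq_h},n_x\rho^q\rangle_{\partial\mathcal T_h}\\
&\quad+\tfrac12(q^2-q_h^2,\partial_x\xi^u)_{\mathcal T_h}
 -\tfrac12\langle q^2-\widehat{q_h^2},n_x\xi^u\rangle_{\partial\mathcal T_h},
\end{align*}
and then to \emph{linearize} each nonlinearity before estimating. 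Using the algebraic identities $uq-u_hq_h=q\,\varepsilon_u+u_h\,\varepsilon_q$ and $q^2-q_h^2=(q+q_h)\,\varepsilon_q$, together with the error decomposition \eqref{decomp_errors}, namely $\varepsilon_u=\xi^u-\eta^u$ and $\varepsilon_q=\rho^q-\mu^q$, I would split $\mathcal Q$ into a \emph{projection-error part} carrying $\eta^u,\mu^q$, which is of optimal order by \eqref{eq:proj-est}--\eqref{mod_proj_est} and contributes only to the $Ch^{2k+1}$ remainder after Young's inequality, and a \emph{genuine-error part} carrying $\xi^u,\rho^q$, which must be absorbed into $C(\|\xi^u\|^2_{\mathcal T_h}+\|\rho^q\|^2_{\mathcal T_h})$.

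The delicate contributions are the volume terms $(u_h\rho^q,\partial_x\rho^q)_{\mathcal T_h}$ and $\tfrac12((q+q_h)\rho^q,\partial_x\xi^u)_{\mathcal T_h}$, in which a differentiated genuine error appears: estimating $\|\partial_x\rho^q\|_{\mathcal T_h}$ or $\|\rho^q\|_{\partial\mathcal T_h}$ \emph{in isolation} by the inverse inequality \eqref{invesre} would introduce a fatal $h^{-1}$ factor. The resolution is not to bound these separately but to integrate by parts and combine the resulting interface contributions with the explicit numerical-flux boundary terms already present in $\mathcal Q$. Here the precise central-plus-penalty structure of the trace \eqref{Vertical_trace}, namely $\widehat{u_hq_h}=u_h\bigl(\tfrac{\widehat q_h+q_h}{2}\bigr)+\tau_{uqq}(\widehat q_h-q_h)n_x$ and $\widehat{q_h^2}=\widehat q_h^{\,2}$, is essential, exactly as in the computation of $\mathcal I_q$ in Lemma \ref{lem:stability}: the volume boundary terms and the flux terms reorganize, modulo projection-error remainders, into benign penalty-type face expressions governed by $\tau_{uqq}$ and by the a priori smallness of the discrete solution, so that the apparent $h^{-1}$ singularity telescopes away. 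Throughout this step the transmission conditions \eqref{eq:weak_TCe_xi} and the single-valuedness identities $\langle\widehat\xi^u,\delta n_x\rangle_{\partial\mathcal T_h}=\langle\widehat\rho^q,\sigma n_x\rangle_{\partial\mathcal T_h}=0$ are used to discard the globally coupled trace pieces.

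With this cancellation in place, every surviving term pairs an undifferentiated error against a factor that is either an exact (hence uniformly bounded) function $u,q,\partial_x q$, or a discrete quantity whose $L^\infty$ norm is controlled uniformly in $h$: the \emph{a priori} assumption \eqref{ass1uu_h2} gives $\|u_h\|_{L^\infty(\mathcal T_h)},\|q_h\|_{L^\infty(\mathcal T_h)}\le C$, and, combined with the inverse inequality \eqref{invesre} and the projection bounds, also $\|\partial_x u_h\|_{L^\infty(\mathcal T_h)}\le C$ (this is precisely where the nonlinearity forces us to invoke the a priori hypothesis). Applying the Cauchy--Schwarz and Young inequalities, the projection estimates \eqref{eq:proj-est}--\eqref{mod_proj_est}, and the inverse trace estimate \eqref{invesre} on the projection-error faces then yields \eqref{Q_ij_est}. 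I expect the main obstacle to be exactly the bookkeeping of the previous paragraph: one must match each differentiated-error volume term with the correct numerical-flux boundary term so that no undifferentiated discrete factor is ever left paired with a differentiated genuine error, since that is the only route that avoids an inverse-inequality power of $h^{-1}$ and keeps the constant in \eqref{Q_ij_est} independent of $h$.
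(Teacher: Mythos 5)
Your overall strategy coincides with the paper's: substitute the test functions, split each nonlinearity into a projection-error part and a genuine-error part, integrate the dangerous volume terms by parts so that their boundary contributions recombine with the central-plus-penalty fluxes (the paper's blocks $\mathcal Q_1$--$\mathcal Q_3$ and $\mathcal Q_7$ do exactly this, with the purely cubic genuine-error block $\mathcal Q_3$ cancelling identically), and control discrete coefficients through the a priori assumption. Your choice of factorization ($uq-u_hq_h=q\varepsilon_u+u_h\varepsilon_q$ with a discrete coefficient, versus the paper's $u\varepsilon_q+q\varepsilon_u-\varepsilon_u\varepsilon_q$ with exact coefficients and an explicit quadratic remainder) is cosmetic. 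However, your final estimating step has a genuine gap. For the mixed terms that pair a projection error against a differentiated genuine error, e.g.\ $(u\,\mu^q,\partial_x\rho^q)_{\mathcal T_h}$ or $(q\,\eta^u,\partial_x\rho^q)_{\mathcal T_h}$, the toolkit you list — Cauchy--Schwarz, the inverse inequality \eqref{invesre}, the projection estimates, and Young — gives only $\|u\|_{L^\infty}\,Ch^{k+1}\,h^{-1}\|\rho^q\|_{\mathcal T_h}=Ch^{k}\|\rho^q\|_{\mathcal T_h}$, hence a remainder $Ch^{2k}$, one full power of $h$ short of the claimed $Ch^{2k+1}$; integrating by parts instead does not help, since both $\partial_x\eta^u$ in the bulk and the trace of $\eta^u$ combined with $\|\rho^q\|_{\partial\mathcal T_h}\le Ch^{-1/2}\|\rho^q\|_{\mathcal T_h}$ lose the same power. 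The paper recovers it by subtracting elementwise constants $u_c,q_c$: the constant parts $(u_c\mu^q,\partial_x\rho^q)_{\mathcal T_h}$ and $(q_c\eta^u,\partial_x\rho^q)_{\mathcal T_h}$ vanish by the orthogonality properties \eqref{eq:proj-moment} and \eqref{proj_ortho_mod} (since $u_c\partial_x\rho^q\in Q_k(K_{ij})$ and $\partial_x\rho^q\in P_{k-1}(I_i)\otimes P_k(J_j)$), and what survives carries the small factors $\|u-u_c\|_{L^\infty(\mathcal T_h)}=\mathcal O(h)$ and $\|u-u_c\|_{L^\infty(\partial\mathcal T_h)}=\mathcal O(h^{1/2})$; see the estimates of $\mathcal Q_1$, $\mathcal Q_4$, $\mathcal Q_5$. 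Without this device your argument only yields $\mathcal O(h^{2k})$, which would degrade Theorem \ref{thm:error_semi} to $\mathcal O(h^{k})$.

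A second, smaller omission: the face terms surviving your flux recombination are not merely ``benign.'' Young's inequality leaves contributions of the form $C\delta\|\widehat\rho^q-\rho^q\|^2_{\mathcal V}$ and $Ch^{1/2}\|\widehat\rho^q-\rho^q\|^2_{\mathcal V}$, and since no jump term appears on the right-hand side of \eqref{Q_ij_est}, these must be explicitly absorbed by the nonpositive penalty $\langle\tau_{uqq},(n_x)^2(\widehat\rho^q-\rho^q)^2\rangle_{\partial\mathcal T_h}$ coming from $\mathcal Q_7$; this is precisely where the condition $-\tau_{uqq}\ge C\delta$ of Assumption \ref{globalass} is used. With these two ingredients made explicit, the rest of your outline matches the paper's proof.
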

\begin{proof}
We estimate the term $\mathcal Q$ on the right hand side of \eqref{err_eqn_cpt_test}. Using the definition of $\mathcal Q$ by \eqref{Q_ij} and incorporating \eqref{Vertical_trace}, and \eqref{eq:weak_TCe_xi}, we obtain
\begin{equation}\label{Q_xi}
    \begin{split}
        \mathcal{Q}(u,q;u_h,q_h;-\rho^q,-\xi^u)&= (uq-u_hq_h,\partial_x\rho^q)_{\mathcal T_h}-\langle uq-\widehat{u_hq_h},n_x\rho^q\rangle_{\partial\mathcal T_h} +\bigl( \tfrac12(q^{2}-q_h^{2}),
          \partial_x\xi^u\bigr)_{\mathcal T_h}\\&\quad-\Bigl\langle\tfrac12(q^{2}-\widehat{q_h^{2}}),
  n_x\xi^u\Bigr\rangle_{\partial\mathcal T_h}\\
  &= (uq-u_hq_h,\partial_x\rho^q)_{\mathcal{T}_h}+\Bigl\langle uq-u_h\left(\frac{\widehat q_h+ q_h}{2}\right),n_x(\widehat\rho^q-\rho^q)\Bigr\rangle_{\partial\mathcal T_h}\\&\quad  + \langle\tau_{uqq}( q_h-\widehat q_h), (n_x)^2(\widehat\rho^q-\rho^q)\rangle_{\partial \mathcal T_h} +\bigl( \tfrac12(q^{2}-q_h^{2}),
          \partial_x\xi^u\bigr)_{\mathcal T_h}\\&\quad-\Bigl\langle\tfrac12(q^{2}-\widehat q_h^{2}),
  n_x\xi^u\Bigr\rangle_{\partial\mathcal T_h}.
    \end{split}
\end{equation}
Note that we have the identities
\begin{equation}\label{q2-hatq2}
\begin{aligned}
 \frac{1}{2}q^2 - \frac{1}{2}q_h^2 &= q(q-q_h)-\frac{1}{2}(q-q_h)^2 = q(\rho^q-\mu^q) -\frac{1}{2}(\rho^q-\mu^q)\\ &= q\rho^q -q\mu^q - \frac{1}{2}(\rho^q)^2 +\rho^q\mu^q - \frac{1}{2}(\mu^q)^2,
 \end{aligned}
\end{equation}
\begin{equation}\label{uq-uhqh3}
    \begin{aligned}
        \frac{1}{2}q^2 - \frac{1}{2}\widehat q_h^2 &= q(q-\widehat q_h)- \frac{1}{2}(q-\widehat q_h)^2 = q\widehat \rho^q- \frac{1}{2}(\widehat \rho^q)^2,
    \end{aligned}
\end{equation}
\begin{equation}\label{uq-uhqh}
    \begin{aligned}
        uq-u_hq_h &= u(q-q_h)+q(u-u_h)- (u-u_h)(q-q_h)\\
        &= u\rho^q-u\mu^q+q\xi^u-q\eta^u -\xi^u\rho^q+\xi^u\mu^q +\eta^u\rho^q - \eta^u\mu^q,
    \end{aligned}
\end{equation}
and that
\begin{equation}\label{uq-hat_uhqh}
    \begin{aligned}
        uq- u_h\left(\frac{\widehat q_h +q_h}{2}\right) &= \frac{1}{2}(u(q-\widehat q_h)+q(u- u_h)- (u- u_h)(q-\widehat q_h))\\
        &\quad+ \frac{1}{2}(u(q- q_h)+q(u- u_h)- (u- u_h)(q- q_h))
        \\ &= \frac{1}{2}\bigl(u\widehat\rho^q +q\xi^u - q\eta^u -\xi^u\widehat \rho^q + \eta^u\widehat \rho^q)\\&\quad+\frac{1}{2}\bigl(u(\rho^q-\mu^q)  +q\xi^u - q\eta^u -\xi^u\rho^q +\xi^u\mu^q +\eta^u\rho^q -\eta^u\mu^q\bigr)\\&  =\frac{1}{2}u\bigl(\widehat\rho^q +\rho^q\bigr) +\frac{1}{2}\eta^u\bigl(\widehat \rho^q +\rho^q\bigr) -\frac{1}{2}\xi^u(\widehat \rho^q+\rho^q) +q \xi^u-q\eta^u \\&\quad+\frac{1}{2}\bigl(-u\mu^q  +\xi^u\mu^q-\eta^u\mu^q\bigr).
    \end{aligned}
\end{equation}
Putting values from \eqref{q2-hatq2}-\eqref{uq-hat_uhqh} into the equation \eqref{Q_xi} yields
\begin{equation*}
    \begin{split}
        \mathcal{Q}&(u,q;u_h,q_h;-\rho^q,-\xi^u)= (u\rho^q-u\mu^q+q\xi^u-q\eta^u -\xi^u\rho^q+\xi^u\mu^q +\eta^u\rho^q - \eta^u\mu^q,\partial_x\rho^q)_{\mathcal{T}_h}\\&\quad+\Bigl\langle\frac{1}{2}u\bigl(\widehat\rho^q +\rho^q\bigr) +\frac{1}{2}\eta^u\bigl(\widehat \rho^q +\rho^q\bigr) -\frac{1}{2}\xi^u(\widehat \rho^q+\rho^q) +q \xi^u-q\eta^u , n_x(\widehat\rho^q-\rho^q)\Bigr\rangle_{\partial\mathcal T_h}\\&\quad  +\Bigl\langle\frac{1}{2}\bigl(-u\mu^q  +\xi^u\mu^q-\eta^u\mu^q\bigr),n_x(\widehat\rho^q-\rho^q)\Bigr\rangle_{\partial\mathcal T_h}+ \langle\tau_{uqq}(\widehat\rho^q-\rho^q+\mu^q),(n_x)^2 (\widehat\rho^q-\rho^q)\rangle_{\partial \mathcal T_h}\\&\quad +\bigl( q\rho^q -q\mu^q - \frac{1}{2}(\rho^q)^2 +\rho^q\mu^q - \frac{1}{2}(\mu^q)^2,
          \partial_x\xi^u\bigr)_{\mathcal T_h}-\Bigl\langle q\widehat \rho^q- \frac{1}{2}(\widehat \rho^q)^2,
  n_x\xi^u\Bigr\rangle_{\partial\mathcal T_h},
   \end{split}
\end{equation*}
and rearranging the terms implies
 \begin{equation}\label{Q_xiu}
    \begin{split} 
\mathcal{Q}&(u,q;u_h,q_h;-\rho^q,-\xi^u)
  = (u\rho^q,\partial_x\rho^q)_{\mathcal T_h}+(\eta^u\rho^q,\partial_x\rho^q)_{\mathcal T_h}+ \Bigl\langle\frac{1}{2}(u+\eta^u)\bigl(\widehat\rho^q +\rho^q\bigr), n_x(\widehat\rho^q-\rho^q)\Bigr\rangle_{\partial\mathcal T_h}\\&\quad +
  (q,\xi^u\partial_x\rho^q+\rho^q\partial_x\xi^u)_{\mathcal T_h} +\langle q\xi^u,n_x(\widehat\rho^q-\rho^q)\rangle_{\partial\mathcal T_h} - \langle q\widehat\rho^q,n_x\xi^u\rangle_{\partial\mathcal T_h} \\& \quad -(\xi^u\rho^q,\partial_x\rho^q)_{\mathcal T_h}-\bigl(\frac{1}{2}(\rho^q)^2,\partial_x\xi^u\bigr)_{\mathcal T_h} +\Bigl\langle \frac{1}{2}(\widehat \rho^q)^2,
  n_x\xi^u\Bigr\rangle_{\partial\mathcal T_h}- \Bigl\langle\frac{1}{2}\xi^u(\widehat\rho^q+\rho),n_x(\widehat\rho^q-\rho^q)\Bigr\rangle_{\partial\mathcal T_h} \\&\quad -(u\mu^q,\partial_x\rho^q)_{\mathcal{T}_h}-(q,\eta^u\partial_x\rho^q+\mu^q\partial_x\xi^u)_{\mathcal{T}_h}\\&\quad + \Bigl\langle\frac{1}{2}\bigl(-u\mu^q  +\xi^u\mu^q-\eta^u\mu^q\bigr)-q\eta^u,n_x(\widehat\rho^q-\rho^q)\Bigr\rangle_{\partial\mathcal T_h}\\&\quad  + (\xi^u\mu^q,\partial_x\rho^q)_{\mathcal{T}_h} + (\rho^q\mu^q,\partial_x\xi^u)_{\mathcal{T}_h}-\frac{1}{2}((\mu^q)^2,\partial_x\xi^u){\mathcal{T}_h} - (\mu^q\eta^u,\partial_x\rho^q)_{\mathcal{T}_h} \\&\quad +
  \langle\tau_{uqq}(\widehat\rho^q-\rho^q+\mu^q), (n_x)^2(\widehat\rho^q-\rho^q)\rangle_{\partial \mathcal T_h}
  \\
  &=: \sum_{i=1}^7 \mathcal Q_i.
    \end{split}
\end{equation}
Now, we estimate each $\mathcal Q_i's$ using integration by parts, interpolation \eqref{eq:proj-est} and inverse inequalities \eqref{invesre}, transmission condition \eqref{eq:weak_TCe_xi}, Young's inequality, and the smoothness of $u$ and $q=u_x$ with assumptions $|\omega-\omega_c| = \mathcal O(h)$ on each $K_{ij}$ and so $\|\omega-\omega_c\|_{L^{\infty}( \mathcal{T}_h)} = O(h)$ and trace inequality implies $\|\omega-\omega_c\|_{L^{\infty}(\partial \mathcal{T}_h)} = \mathcal O(h^{1/2})$ for some constant $\omega_c$ on each element $K_{ij}$, where $\omega =u,q$.
\begin{align*}
    \mathcal Q_1 & = (u,\rho^q\partial_x\rho^q)_{\mathcal T_h}+(\eta^u,\rho^q\partial_x\rho^q)_{\mathcal T_h}+ \Bigl\langle\frac{1}{2}(u+\eta^u)\bigl(\widehat\rho^q +\rho^q\bigr), n_x(\widehat\rho^q-\rho^q)\Bigr\rangle_{\partial\mathcal T_h}\\
    &= -\frac{1}{2}
    (u_x,(\rho^q)^2)_{\mathcal T_h} -\frac{1}{2}
    (\eta^u_x,(\rho^q)^2)_{\mathcal T_h} +  \Bigl\langle\frac{1}{2}(u+\eta^u), n_x(\widehat\rho^q)^2\Bigr\rangle_{\partial\mathcal T_h}
    \\
    &\leq C\|u_x\|_{L^\infty(\mathcal T_h)}\|\rho^q\|^2_{\mathcal T_h} +C\|\eta^u_x\|_{L^{\infty}(\mathcal T_h)}\|\rho^q\|^2_{\mathcal T_h}  +\Bigl\langle\frac{1}{2}(u-u_c), n_x(\widehat\rho^q)^2\Bigr\rangle_{\partial\mathcal T_h} +\Bigl\langle\frac{1}{2}\eta^u, n_x(\widehat\rho^q)^2\Bigr\rangle_{\partial\mathcal T_h} \\& \leq C \|\rho^q\|^2_{\mathcal T_h} +  Ch^{1/2} \|\widehat \rho^q\|^2_{\mathcal V} \leq C \|\rho^q\|^2_{\mathcal T_h} + Ch^{1/2} \|\widehat \rho^q-\rho^q\|_{\mathcal V}^2 + Ch^{1/2} \|\rho^q\|_{\mathcal V}^2 \\&
    \leq C \|\rho^q\|^2_{\mathcal T_h} + Ch^{1/2} \|\widehat \rho^q-\rho^q\|_{\mathcal V}^2,
\end{align*}
\begin{align*}
    \mathcal Q_2 &= (q,\xi^u\partial_x\rho^q+\rho^q\partial_x\xi^u)_{\mathcal T_h} +\langle q\xi^u,n_x(\widehat\rho^q-\rho^q)\rangle_{\partial\mathcal T_h} - \langle q\widehat\rho^q,n_x\xi^u\rangle_{\partial\mathcal T_h}\\&= -(q_x,\xi^u\rho^q)_{\mathcal T_h} +\langle q\xi^u,n_x\widehat\rho^q\rangle_{\partial\mathcal T_h} - \langle q\widehat\rho^q,n_x\xi^u\rangle_{\partial\mathcal T_h}\\
    &\leq C\|q_x\|_{L^{\infty}}(\|\xi^u\|_{\mathcal T_h}^2 + \|\rho^q\|_{\mathcal T_h}^2)\leq C \|\xi^u\|_{\mathcal T_h}^2 + C\|\rho^q\|_{\mathcal T_h}^2,
\end{align*}
\begin{align*}
    \mathcal Q_3 &= -(\xi^u\rho^q,\partial_x\rho^q)_{\mathcal T_h}-\bigl(\frac{1}{2}(\rho^q)^2,\partial_x\xi^u\bigr)_{\mathcal T_h} +\Bigl\langle \frac{1}{2}(\widehat \rho^q)^2,
  n_x\xi^u\Bigr\rangle_{\partial\mathcal T_h}- \Bigl\langle\frac{1}{2}\xi^u(\widehat\rho^q+\rho),n_x(\widehat\rho^q-\rho^q)\Bigr\rangle_{\partial\mathcal T_h}
    \\& =- \Bigl\langle \xi^u, \frac{1}{2}(\rho^q)^2n_x\Bigr\rangle_{\partial\mathcal T_h} +\Bigl\langle \frac{1}{2}(\widehat \rho^q)^2,
  n_x\xi^u\Bigr\rangle_{\partial\mathcal T_h}- \Bigl\langle\frac{1}{2}\xi^u(\widehat\rho^q+\rho),n_x(\widehat\rho^q-\rho^q)\Bigr\rangle_{\partial\mathcal T_h} =0,
\end{align*}
\begin{align*}
    \mathcal Q_4 & = -(u\mu^q,\partial_x\rho^q)_{\mathcal{T}_h}-(q,\eta^u\partial_x\rho^q+\mu^q\partial_x\xi^u)_{\mathcal{T}_h}
    \\ & \leq \|u-u_c\|_{L^\infty(\mathcal T_h)}\|\mu^q\|_{\mathcal{T}_h}\|\partial_x \rho^q\|_{\mathcal{T}_h} + \|q-q_c\|_{L^\infty(\mathcal T_h)}(\|\eta^u\|_{\mathcal{T}_h}\|\partial_x \rho^q\|_{\mathcal{T}_h}  +\|\mu^q\|_{\mathcal{T}_h}\|\partial_x \xi^u\|_{\mathcal{T}_h }) \\&\leq Ch^{2k+2} + C( \|\rho^q\|^2_{\mathcal T_h} +  \|\xi^u\|^2_{\mathcal T_h}),
\end{align*}
\begin{align*}
    \mathcal Q_5 & = \Bigl\langle\frac{1}{2}\bigl(-u\mu^q  +\xi^u\mu^q-\eta^u\mu^q\bigr)-q\eta^u,n_x(\widehat\rho^q-\rho^q)\Bigr\rangle_{\partial\mathcal T_h}\\
    &\leq C\|u-u_c\|_{L^{\infty}(\partial\mathcal T_h )}h^{-1/2}h^{1/2} \|\mu^q\|_{\partial \mathcal T_h}\|\widehat\rho^q-\rho^q\|_{ \mathcal V} + C\|\mu^q\|_{L^{\infty}(\partial\mathcal T_h )}\|\xi^u\|_{\partial \mathcal T_h}\|\widehat\rho^q-\rho^q\|_{ \mathcal V}\\&\quad +C\|\eta^u\|_{L^{\infty}(\partial\mathcal T_h )}\|\mu^q\|_{\partial \mathcal T_h}\|\widehat\rho^q-\rho^q\|_{ \mathcal V}+C\|q-q_c\|_{L^{\infty}(\partial\mathcal T_h )}h^{-1/2}h^{1/2} \|\eta^u\|_{\partial \mathcal T_h}\|\widehat\rho^q-\rho^q\|_{ \mathcal V}\\& \quad 
    -\Bigl\langle\frac{1}{2}(u_c\mu^q),n_x(\widehat\rho^q-\rho^q)\Bigr\rangle_{\partial\mathcal T_h} -\langle q_c\eta^u,n_x(\widehat\rho^q-\rho^q)\rangle_{\partial\mathcal T_h}
    \\
    &\leq C(\delta)h^{2k+1} + C\|\xi^u\|^2_{ \mathcal T_h} +  C\delta\|\widehat \rho^q-\rho^q\|_{\mathcal V}^2,
\end{align*}
\begin{align*}
    \mathcal Q_6& = (\xi^u\mu^q,\partial_x\rho^q)_{\mathcal{T}_h} + (\rho^q\mu^q,\partial_x\xi^u)_{\mathcal{T}_h}-\frac{1}{2}((\mu^q)^2,\partial_x\xi^u){\mathcal{T}_h} - (\mu^q\eta^u,\partial_x\rho^q)_{\mathcal{T}_h}\\ &\leq \|\mu^q\|_{L^{\infty}}\|\xi^u\|_{\mathcal T_h}\|\rho^q_x\|_{\mathcal T_h} +\|\mu^q\|_{L^{\infty}}\|\xi^u_x\|_{\mathcal T_h}\|\rho^q\|_{\mathcal T_h} +\|\mu^q\|_{L^{\infty}}\|\mu^q\|_{\mathcal T_h}\|\xi^u_x\|_{\mathcal T_h}\\&\qquad +\|\mu^q\|_{L^{\infty}}\|\eta^u\|_{\mathcal T_h}\|\rho^q_x\|_{\mathcal T_h}\\& \leq Ch^{2k+2} + C\|\rho^q\|^2_{\mathcal T_h} + C\|\xi^u\|_{\mathcal T_h}^2,
\end{align*}
\begin{align*}
   \mathcal Q_7 = \langle\tau_{uqq}(\widehat\rho^q-\rho^q+\mu^q), (\widehat\rho^q-\rho^q)\rangle_{\partial \mathcal T_h} &= \langle\tau_{uqq}, (n_x)^2(\widehat\rho^q-\rho^q)^2\rangle_{\partial \mathcal T_h} +\langle\tau_{uqq}\mu^q,(n_x)^2 (\widehat\rho^q-\rho^q)\rangle_{\partial \mathcal T_h}\\&\leq \langle\tau_{uqq}, (n_x)^2(\widehat\rho^q-\rho^q)^2\rangle_{\partial \mathcal T_h} +Ch^{2k+1}+C\|\widehat \rho^q-\rho^q\|_{\mathcal V}^2.
\end{align*}
Putting all the estimates for $\mathcal Q_i's$ in the equation \eqref{Q_xiu}, we have
\begin{equation*}
\begin{split}
     \mathcal{Q}(u,q;u_h,q_h;-\rho^q,-\xi^u)&\leq \langle\tau_{uqq}, (n_x)^2(\widehat\rho^q-\rho^q)^2\rangle_{\partial \mathcal T_h}+ Ch^{2k+1} + C (\|\rho^q\|^2_{\mathcal T_h} + \|\xi^u\|_{\mathcal T_h}^2) \\&\qquad +C\delta \|\widehat \rho^q-\rho^q\|_{\mathcal V}^2 \\& \leq  Ch^{2k+1} + C(\|\rho^q\|^2_{\mathcal T_h} + \|\xi^u\|_{\mathcal T_h}^2),
\end{split}
\end{equation*} 
where positive constants $C$ and $\delta$, which comes from the application of Young's inequality,  are generic and independent of $h$. We have taken $\delta$ small enough so that $-\tau_{uqq}\geq C\delta$ even for small negative $\tau_{uqq}$ as in Assumption \ref{globalass}. This completes the proof.
\end{proof}

\begin{lemma}\label{H_lemma}
    Let $\mathcal{H}$ be defined by \eqref{H_ij} and assume that all the hypotheses of the Theorem \ref{thm:error_semi} hold. Then we have the following estimate
   \begin{equation}\label{H_ij_est}
    \begin{split}
        \mathcal{H}(f;u,u_h,-\xi^u)\leq -\bar C\|\widehat\xi^u - \xi^u\|^2_{  \mathcal V}+Ch^{2k+1} + C\|\xi^u\|_{ \mathcal T_h}^2-\langle (f(u)-\widehat{f(u_h)}),n_x\widehat \xi^u\rangle_{\partial\mathcal T_h}.
    \end{split}
\end{equation}
\end{lemma}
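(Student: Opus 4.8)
The plan is to prove the equivalent statement obtained by carrying the un-estimated flux term to the left-hand side: inserting the definition \eqref{H_ij} shows that
\[
\mathcal{H}(f;u,u_h,-\xi^u)+\langle f(u)-\widehat{f(u_h)},n_x\widehat\xi^u\rangle_{\partial\mathcal T_h}=(f(u)-f(u_h),\partial_x\xi^u)_{\mathcal T_h}+\langle f(u)-\widehat{f(u_h)},n_x(\widehat\xi^u-\xi^u)\rangle_{\partial\mathcal T_h}=:\mathcal T,
\]
so it suffices to bound $\mathcal T$ by $-\bar C\|\widehat\xi^u-\xi^u\|_{\mathcal V}^2+Ch^{2k+1}+C\|\xi^u\|_{\mathcal T_h}^2$. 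Since $f$ is quadratic I would use the \emph{exact} identity $f(u)-f(u_h)=f'(u)\varepsilon_u-\tfrac32\varepsilon_u^2$ with $\varepsilon_u=\xi^u-\eta^u$ throughout, so each term is either linear in the computable error $\xi^u$ (with a smooth coefficient) or manifestly higher order.

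For the volume contribution I would integrate by parts only in the leading linear piece, $(f'(u)\xi^u,\partial_x\xi^u)_{\mathcal T_h}=-\tfrac32(u_x,(\xi^u)^2)_{\mathcal T_h}+\tfrac12\langle f'(u)(\xi^u)^2,n_x\rangle_{\partial\mathcal T_h}$, keeping the boundary term and bounding the remainder by $C\|\xi^u\|_{\mathcal T_h}^2$. The projection piece $(f'(u)\eta^u,\partial_x\xi^u)_{\mathcal T_h}$ is the delicate one: a naive inverse estimate loses a factor $h^{-1}$ and only yields $h^{2k}$. Instead I would freeze $f'(u)$ at a cell constant $f'(u_c)$; the frozen part vanishes by the orthogonality \eqref{proj_ortho_mod} because $\partial_x\xi^u\in P_{k-1}(I_i)\otimes P_k(J_j)$, while the remainder carries an extra factor $\|f'(u)-f'(u_c)\|_{L^\infty}\le Ch$ that cancels the inverse-inequality $h^{-1}$, giving $C\|\xi^u\|_{\mathcal T_h}^2+Ch^{2k+2}$. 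The quadratic piece $-\tfrac32(\varepsilon_u^2,\partial_x\xi^u)_{\mathcal T_h}$ is handled the same way, using the a priori bound $\|\varepsilon_u\|_{L^\infty(\mathcal T_h)}\le h$ from Assumption \ref{Aprioriass} to absorb the inverse factor.

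For the boundary flux term I would substitute $\widehat{f(u_h)}=f(u_h)-\tau_f(\widehat u_h-u_h)n_x$ together with the skeleton identity $\widehat u_h-u_h=(P^\partial u-u)-\eta^u-(\widehat\xi^u-\xi^u)$. This extracts the stabilization term $-\langle\tau_f,(n_x)^2(\widehat\xi^u-\xi^u)^2\rangle_{\partial\mathcal T_h}$ plus projection-error terms controlled by Young's inequality; the $\eta^u$-parts on right and top faces vanish by the trace orthogonality \eqref{eq:proj-vert-left} (after freezing $f'(u)$), and the remaining $\eta^u$- and $\varepsilon_u^2$-parts are of order $h^{2k+1}$ by the a priori $L^\infty$ trace bounds in \eqref{ass1uu_h2}. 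After these reductions the only $\mathcal O(1)$ boundary survivors, all supported on $\mathcal V$, are $\tfrac12\langle f'(u)(\xi^u)^2,n_x\rangle$, $\langle f'(u)\xi^u,n_x(\widehat\xi^u-\xi^u)\rangle$ and $-\langle\tau_f,(\widehat\xi^u-\xi^u)^2\rangle$.

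The decisive step is to complete the square in these three terms. Using $\tfrac12(\xi^u)^2+\xi^u(\widehat\xi^u-\xi^u)=\tfrac12(\widehat\xi^u)^2-\tfrac12(\widehat\xi^u-\xi^u)^2$ and noting that $\tfrac12\langle f'(u)(\widehat\xi^u)^2,n_x\rangle_{\mathcal V}=0$ — because $\widehat\xi^u$ is single-valued on interior vertical faces and vanishes on $\Gamma_L,\Gamma_R$ by \eqref{boundary_data2}, while $f'(u)$ is continuous — the survivors collapse to $-\langle\tau_f+\tfrac{n_x}{2}f'(u),(\widehat\xi^u-\xi^u)^2\rangle_{\mathcal V}$. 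Since $f'$ is affine and $\operatorname{dist}(u,J(\widehat u_h,u_h))\le\|u-u_h\|_{L^\infty(\partial\mathcal T_h)}\le h^{1/2}$, the stabilization choice in Assumption \ref{globalass} yields $\tau_f+\tfrac{n_x}{2}f'(u)\ge\bar C>0$ pointwise for $h$ small, producing $-\bar C\|\widehat\xi^u-\xi^u\|_{\mathcal V}^2$; collecting the Young remainders as $C(\delta)h^{2k+1}+\delta\|\widehat\xi^u-\xi^u\|_{\mathcal V}^2+C\|\xi^u\|_{\mathcal T_h}^2$ and absorbing the $\delta$-term completes the estimate. \textbf{The main obstacle} is twofold: keeping the $\eta^u$ volume term at order $h^{2k+1}$ rather than $h^{2k}$ through the orthogonality-plus-coefficient-freezing trick, and engineering the square completion so that the indefinite term $\tfrac12\langle f'(u)(\widehat\xi^u)^2,n_x\rangle_{\mathcal V}$ cancels exactly, leaving the sign-definite stabilization term $-\bar C\|\widehat\xi^u-\xi^u\|_{\mathcal V}^2$.
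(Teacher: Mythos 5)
Your proposal is correct and follows essentially the same route as the paper's proof: the same rearrangement that isolates the flux term, the exact quadratic Taylor expansion of $f(u)-f(u_h)$ in $\xi^u-\eta^u$, the cell-wise freezing of $f'(u)$ combined with the projection orthogonality and the $\mathcal O(h)$ oscillation bound, the completion of the square on the vertical faces to produce $-\tfrac12\langle f'\,n_x,(\widehat\xi^u-\xi^u)^2\rangle$, and the absorption of this term into the stabilization $-\langle\tau_f,(\widehat\xi^u-\xi^u)^2\rangle$ via Assumption~\ref{globalass}. The only differences are cosmetic (you kill the single-valued term $\tfrac12\langle f'(u)(\widehat\xi^u)^2,n_x\rangle$ by continuity of $f'(u)$ where the paper works with $f'(u_c)$, and you invoke the trace orthogonality of $\eta^u$ where the paper simply applies Young's inequality), and neither affects the final bound.
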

\begin{proof}
   Using the transmission condition \eqref{eq:weak_TCe_xi} and smoothness of $f$, we have
\begin{equation}\label{H_ij1}
    \begin{split}
        \mathcal{H}(f;u,u_h,-\xi^u) +\langle (f(u)-\widehat{f(u_h)})&   ,n_x\widehat \xi^u\rangle_{\partial\mathcal T_h}= (f(u)-f(u_h),\partial_x\xi^u)_{\mathcal T_h}-\langle f(u)-\widehat{f(u_h)},n_x\xi^u\rangle_{\partial\mathcal T_h}\\& \qquad +\langle (f(u)-\widehat{f(u_h)}),n_x\widehat \xi^u\rangle_{\partial\mathcal T_h}\\
        & = (f(u)-f(u_h),\partial_x\xi^u)_{\mathcal T_h}+\langle f(u)-f(u_h),n_x(\widehat \xi^u -\xi^u)\rangle_{\partial\mathcal T_h} \\&\qquad + \langle \tau_f (\widehat u_h - u_h),(n_x)^2(\widehat \xi^u-\xi^u)\rangle_{\partial\mathcal T_h}\\
        & = (f(u)-f(u_h),\partial_x\xi^u)_{\mathcal T_h}+\langle f(u)-f(u_h),n_x(\widehat \xi^u -\xi^u)\rangle_{\partial\mathcal T_h} \\&\qquad- \langle \tau_f (\widehat \xi^u - \xi^u +\eta^u),(n_x)^2(\widehat \xi^u-\xi^u)\rangle_{\partial\mathcal T_h}.
    \end{split}
\end{equation}
We expand $f(u)-f(u_h)$  about $u$ as follows
\begin{align*}
  f(u)-f(u_h)= f'(u)(u-u_h)-\frac12\,f''_{\!u}\,(u-u_h)^2
   =  f'(u)(\xi^u - \eta^u)-\frac12\,f''_{\!u}\,(\xi^u - \eta^u)^2.
\end{align*}
Using the above expansion, we get
\begin{equation}\label{H_ij2}
    \begin{split}
        \mathcal{H}(f;u,u_h,-\xi^u)& +  \langle (f(u)-\widehat{f(u_h)}),n_x\widehat \xi^u
             \rangle_{\partial \mathcal T_h}
         \\&= \underbrace{\bigl(f'(u)(\xi^u - \eta^u),\partial_x\xi^u\bigr)_{\mathcal T_h}  + \langle f'(u) (\xi^u-\eta^u),n_x(\widehat \xi^u -\xi^u)\rangle_{\partial\mathcal T_h}}_{H_1}\\&\quad \underbrace{-\Bigl(\frac12\,f''_{\!u}\,(\xi^u - \eta^u)^2,\partial_x\xi^u\Bigr)_{\mathcal T_h}-\Big\langle \frac12\, f''_{\!u}\,
  ( \xi^u-\eta^u)^2,n_x(\widehat \xi^u -\xi^u)\Big\rangle_{\partial\mathcal T_h}}_{H_2}\\&\quad \underbrace{-\langle \tau_f (\widehat \xi^u - \xi^u +\eta^u),(n_x)^2(\widehat \xi^u-\xi^u)\rangle_{\partial\mathcal T_h}}_{H_3}.
    \end{split}
\end{equation}
Incorporating interpolation \eqref{eq:proj-est} and inverse inequalities \eqref{invesre}, integration by parts, Young's inequality, and Assumption \ref{Aprioriass},  with  $\|u-u_c\|_{L^{\infty}(\mathcal{T}_h)} = \mathcal O(h)$ and  $\|u-u_c\|_{L^{\infty}(\partial \mathcal T_h)} =  \mathcal{O}(h^{1/2})$, we have 
\begin{equation*}
    \begin{aligned}
        H_1 &= \bigl(f'(u)(\xi^u - \eta^u),\partial_x\xi^u\bigr)_{\mathcal T_h}  + \langle f'(u) (\xi^u-\eta^u),n_x(\widehat \xi^u -\xi^u)\rangle_{\partial\mathcal T_h} \\
        &= \Bigl(f'(u),\frac{1}{2}\partial_x(\xi^u)^2\Bigr)_{\mathcal T_h} -\bigl(f'(u) \eta^u,\partial_x\xi^u\bigr)_{\mathcal T_h} + \langle f'(u) (\xi^u-\eta^u),n_x(\widehat \xi^u -\xi^u)\rangle_{\partial\mathcal T_h}\\& = 
        -\Bigl( f''(u)u_x,\frac{1}{2}(\xi^u)^2\Bigr)_{\mathcal T_h} + \Bigl\langle (f'(u)-f'(u_c)),n_x\frac{1}{2}(\xi^u)^2\Bigr\rangle_{\partial\mathcal T_h} +\Bigl\langle f'(u_c),n_x\frac{1}{2}(\xi^u)^2\Bigr\rangle_{\partial\mathcal T_h} \\& \quad -\bigl( (f'(u)-f'(u_c))\eta^u,\partial_x\xi^u\bigr)_{\mathcal T_h}  + \langle (f'(u)-f'(u_c))( \xi^u-\eta^u),n_x(\widehat \xi^u -\xi^u)\rangle_{\partial\mathcal T_h} \\&\quad+ \langle f'(u_c)( \xi^u-\eta^u),n_x(\widehat \xi^u -\xi^u)\rangle_{\partial\mathcal T_h}
        - \Bigl\langle f'(u_c),n_x\frac{1}{2}(\widehat \xi^u)^2\Bigr\rangle_{\partial\mathcal T_h}
        \\& \leq C\|f''(u)\|_{L^{\infty}(\mathcal{T}_h)}\|u_x\|_{L^\infty(\mathcal T_h)}\|\xi^u\|_{\mathcal{T}_h}^2+ Ch^{-\frac{1}{2}}\|f'(u)-f'(u_c)\|_{L^{\infty}(\partial\mathcal T_h)}\|\xi^u\|_{\mathcal{T}_h}^2 \\& \quad 
        -\Bigl\langle f'(u_c),n_x\frac{1}{2}(\xi^u)^2\Bigr\rangle_{\partial\mathcal T_h}
        - \Bigl\langle f'(u_c),n_x\frac{1}{2}(\widehat \xi^u)^2\Bigr\rangle_{\partial\mathcal T_h} + \langle f'(u_c) \xi^u,n_x\widehat \xi^u \rangle_{\partial\mathcal T_h}\\& \quad + h^{-1}\|f'(u)-f'(u_c)\|_{L^{\infty}(\mathcal T_h)}\|\eta^u\|_{\mathcal T_h}\|\xi^u\|_{\mathcal T_h}\\&\quad + h^{-\frac{1}{2}}\|f'(u)-f'(u_c)\|_{L^{\infty} (\partial \mathcal T_h)}(\|\eta^u\|_{\mathcal T_h}+\|\xi^u\|_{\mathcal T_h})\|\widehat\xi^u - \xi^u\|_{\mathcal V}  - \langle f'(u_c)\eta^u,n_x(\widehat \xi^u -\xi^u)\rangle_{\partial\mathcal T_h}
        \\ & 
       \leq C\|\xi^u\|_{\mathcal{T}_h}^2 - \frac{1}{2}\langle f'(u_c),n_x(\widehat\xi^u - \xi^u)^2\rangle_{\partial \mathcal T_h} + Ch^{2k+1} +C \|\widehat\xi^u - \xi^u\|^2_{  \mathcal V},
       \end{aligned}
       \end{equation*}
       
\begin{equation*}
     \begin{aligned}
       H_2 &= -\bigl(\frac12\,f''_{\!u}\,(\xi^u - \eta^u)^2,\partial_x\xi^u\bigr)_{\mathcal T_h}-\langle \frac12\, f''_{\!u}\,
  ( \xi^u-\eta^u)^2,n_x(\widehat \xi^u -\xi^u)\rangle_{\partial\mathcal T_h} \\
       &\leq C h^{-1}\|u-u_h\|_{L^\infty(\mathcal T_h)}\|\xi^u-\eta^u\|_{\mathcal{T}_h}\|\xi^u\|_{\mathcal T_h}+ C h^{-\frac{1}{2}}\|u-u_h\|_{L^\infty(\partial\mathcal T_h)}\|\xi^u-\eta^u\|_{\mathcal{T}_h}\|\widehat \xi^u-\xi^u\|_{ \mathcal V} \\
       &\leq Ch^{2k+2} + C\|\xi^u\|_{ \mathcal T_h}^2+ C\|\widehat\xi^u - \xi^u\|^2_{ \mathcal V}, 
       \end{aligned}
\end{equation*}
       \begin{equation*}
     \begin{aligned}
       H_3 &= -\langle \tau_f (\widehat \xi^u - \xi^u +\eta^u),(n_x)^2(\widehat \xi^u-\xi^u)\rangle_{\partial\mathcal T_h}\\&\leq -\langle \tau_f ,(n_x)^2(\widehat \xi^u-\xi^u)^2\rangle_{ \partial \mathcal T_h} - \langle \tau_f \eta^u,(n_x)^2(\widehat \xi^u-\xi^u)\rangle_{\partial\mathcal T_h}.
    \end{aligned}
\end{equation*}
Hence, substituting the values of $H_i's$ into the equation \eqref{H_ij1}, we have 
\begin{equation*}
    \begin{split}
        \mathcal{H}(f;u,u_h,-\xi^u) &+  \langle (f(u)-\widehat{f(u_h)}),n_x\widehat \xi^u
             \rangle_{\partial \mathcal T_h}\\&\leq - \Big\langle \tau_f - \frac{1}{2}|f'(u_c)|, (n_x)^2(\widehat \xi^u-\xi^u)^2\Big\rangle_{\partial\mathcal T_h} +Ch^{2k+1} + C\|\xi^u\|_{ \mathcal T_h}^2+ C\delta\|\widehat\xi^u - \xi^u\|^2_{ \mathcal V}.
    \end{split}
\end{equation*}
From Assumption \ref{globalass}, $C_l$ is large enough so that $-(\tau_f -\frac{1}{2}|f'(u_c)|) \leq -C_l$, and $-C_l+ C\delta \leq -\bar C$, where $\delta$ comes from the application of the Young's inequality and the generic constants $C$ and $\bar C$ are positive. This completes the proof.
\end{proof}

\end{document}